\newtheorem{theorem}{Theorem}[section]
\newtheorem{lemma}[theorem]{Lemma}
\newtheorem{proposition}[theorem]{Proposition}
\newtheorem{definition}[theorem]{Definition}
\newtheorem{remark}[theorem]{Remark}
\newtheorem{corollary}{Corollary}[theorem]
\newtheorem{example}[theorem]{Example}
\newtheorem{algorithm}[theorem]{Algorithm}
\title{Geodesic growth in virtually abelian groups}
\author{Alex Bishop}
\address{University of Technology Sydney, Australia}
\urladdr{\url{https://alexbishop.github.io}}
\email{\href{mailto:alexbishop1234@gmail.com}{alexbishop1234@gmail.com}}
\begin{document}
\begin{abstract}
	We show that the geodesic growth function of any finitely generated virtually abelian group is either polynomial or exponential; and that the geodesic growth series is holonomic, and rational in the polynomial growth case.
	In addition, we show that the language of geodesics is blind multicounter.
	
	\bigskip
	
	\noindent 2020 Mathematics Subject Classification: 20F65, 20K35, 68Q45.
	
	\noindent \textit{Keywords:}
	virtually abelian group,
	geodesic language,
	geodesic growth,
	blind multicounter language,
	holonomic series,
	generating function
\end{abstract}
\maketitle

\section{Introduction}\label{sec:introduction}

The concept of growth in groups is well studied, with famous results including Gromov's classification of groups with polynomial growth~\cite{gromov1981}, and Grigorchuk's example of intermediate growth~\cite{grigorchuk1983} which initiated a great interest in such groups~\cite{mann2012,nekrashevych2005,harpe2000,bartholdi2003a}.

Bridson \textit{et al.}~\cite{bridson2012} asked if there exists a group with intermediate \textit{geodesic} growth, and if there is a characterisation of groups with polynomial \textit{geodesic} growth.
Towards these questions they showed that there is no nilpotent group with intermediate geodesic growth, and provided a sufficient condition for a virtually abelian group to have polynomial geodesic growth.
In this paper, we take the next step in the study of this problem by showing that no virtually abelian group can have intermediate geodesic growth with respect to any finite (weighted monoid) generating set.

The study of geodesic growth of abelian groups began with Shapiro~\cite{shapiro1997} who considered the function $p_S\colon G \to \mathbb{N}$ which counts the geodesics corresponding to a given element of an abelian group $G$ with respect to a generating set $S$.

Benson~\cite{benson1983} showed that the standard growth series for virtually abelian groups is rational with respect to any finite (weighted monoid) generating set.
This result was recently generalised by Evetts~\cite{evetts2019} who showed that the coset, subgroup, and conjugacy growth series of a virtually abelian group is rational with respect to any finite (weighted monoid) generating set.
In this paper, we combine the work of Benson~\cite{benson1983} with a result by Massazza~\cite{massazza1993} on languages with holonomic generating functions. 
In particular, we construct a weight-preserving bijection from the set of all geodesics in a virtually abelian group to a formal language with holonomic generating function.
Then, using the properties of holonomic functions, we show that the geodesic growth of a virtually abelian group is either polynomial or exponential.
However, this bijection is not a monoid homomorphism, and cannot be used to immediately obtain a formal language characterisation for the set of all geodesics in a virtually abelian group.
Instead, we prove that the language of geodesics is accepted by a \emph{blind multicounter automaton}, as defined by Greibach~\cite{greibach1978}.
Thus, this paper also builds on the known results of formal language classification of geodesics \cite{gromov1987,epstein1992,cleary2006,elder2005}.

We begin by establishing our notation in \cref{sec:notation}.
Then, in \cref{sec:holonomic-functions} we provide an overview of the theory of holonomic functions, which we use in \cref{sec:polyhedrally-constrained-languages} where we introduce a family of languages with holonomic generating functions.
In \cref{sec:patterned-words}, we define patterned words in virtually abelian groups, and construct an algorithm to `shuffle' words into this form.
In \cref{sec:geodesic-growth} we show that there is no virtually abelian group with intermediate geodesic growth, in particular, we make use of the algorithm in \cref{sec:patterned-words} to construct a weight-preserving bijection between geodesics in a virtually abelian group and words in a certain language with holonomic generating function.
Lastly, in \cref{sec:blind-multicounter-automata}, we use the algorithm introduced in \cref{sec:patterned-words} to show that the set of all geodesics in a virtually abelian group forms a blind multicounter language.

\section{Notation}\label{sec:notation}

Let $\mathbb{N} = \{0,1,2,\ldots\}$ denote the set of nonnegative integers, including zero, and $\mathbb{N}_+ = \{1,2,3,\ldots\}$ the set of positive integers.

Let $G$ be a group generated as a monoid by a finite weighted set $S$, where each generator $s \in S$ has a positive integer weight $\omega(s) \in \mathbb{N}_+$.
We write $S^*$ for the set of all words in the letters of $S$, and $\overline{\sigma} \in G$ for the group element corresponding to $\sigma \in S^*$.
The \emph{weight} of a word $\sigma = \sigma_1 \sigma_2 \cdots \sigma_k \in S^*$ is then
\[
	\omega(\sigma)
	=
	\omega(\sigma_1) + 
	\omega(\sigma_2) +
	\cdots +
	\omega(\sigma_k).
\]
Moreover, we write $|\sigma|_S = k$ for the \emph{word length} of $\sigma$.
Then the \emph{weighted length} of an element $g \in G$ is given by
\[
	\ell_\omega(g)
	=
	\min\{
		\omega(\sigma)
	\mid
		\overline{\sigma} = g
		\text{ where }
		\sigma \in S^*
	\}.
\]

We say that a word $\sigma \in S^*$ is a \emph{geodesic} if it represents $\overline{\sigma}$ with minimal weight, that is, $\omega(\sigma) = \ell_\omega(\overline{\sigma})$.
Notice then that any sub-word of a geodesic is also a geodesic.
We define the \emph{geodesic growth function} $\gamma_S\colon\mathbb{N} \to \mathbb{N}$ such that $\gamma_S(n)$ counts the geodesic words with weight at most $n$, that is,
\[
	\gamma_{S}(n)
	=
	\#
	\{
		\sigma \in S^*
	\ |\ 
		\omega(\sigma) \leqslant n \text{ and }
		\sigma \text{ is geodesic}
	\}.
\]

We say that $G$ has \emph{polynomial geodesic growth} with respect to $S$ if there are constants $\beta,d\in \mathbb{N}_+$ such that $\gamma_S(n) \leqslant \beta \cdot n^d$ for each $n > 0$; \emph{exponential geodesic growth} with respect $S$ if there is a constant $\alpha \in \mathbb{R}$ with $\alpha>1$ such that $\gamma_S(n) \geqslant \alpha^n$ for each $n \geqslant 0$; and \emph{intermediate geodesic growth} with respect to $S$ if the growth is neither polynomial nor exponential.
The \emph{geodesic growth series} is the power series given by $f_S(z) = \sum_{n=0}^\infty \gamma_S(n) z^n$.

Notice that geodesic growth functions are submultiplicative, that is, for each $n,m\in \mathbb{N}$ we have $\gamma_S(n+m) \leqslant \gamma_S(n) \gamma_S(m)$.
Then, from Fekete's lemma~\cite{fekete1923} we see that the limit $\alpha_S = \lim_{n \to \infty}\sqrt[n]{\gamma_S(n)}$, known as the \emph{growth rate} of $\gamma_S(n)$, is defined.
From this we see that  $G$ has exponential geodesic growth with respect to $S$ if and only if $\alpha_S > 1$, and $\alpha_S = 1$ otherwise.

\subsection{Polyhedral Sets}\label{sec:polyhedral-sets}

Benson~\cite{benson1983} made use of the theory of \emph{polyhedral sets} and their closure properties to show that the standard growth series of virtually abelian groups is rational.
In this paper, we modify these arguments to show a similar result for the geodesic growth series.
Following the convention of Benson~\cite{benson1983}, we define polyhedral sets as follows.

A subset $\mathcal{E} \subseteq \mathbb{Z}^m$ is called an \emph{elementary region} if it can be expressed as
\[
	\left\{
		z \in \mathbb{Z}^m
		\, \middle\vert \,
		a\cdot z = b
	\right\},
	\ 
	\left\{
		z \in \mathbb{Z}^m
		\, \middle\vert \,
		a\cdot z > b
	\right\}
	\text{ or } 
	\left\{
		z \in \mathbb{Z}^m
		\, \middle\vert \,
		a\cdot z \equiv b\ (\bmod\ c)
	\right\}
\]
for some $a \in \mathbb{Z}^m$ and $b,c\in \mathbb{Z}$ with $c > 0$.
A \emph{basic polyhedral set} is a finite intersection of elementary regions;
and a \emph{polyhedral set} is a finite disjoint union of basic polyhedral sets.
It can be seen from this definition that the sets $\emptyset$, $\mathbb{N}^m$ and $\mathbb{Z}^m$ are polyhedral.
We have the following closure properties.

\begin{proposition}[Proposition~13.1~and~Remark~13.2~in~\cite{benson1983}]\label{prop:closure-properties-of-polyhedral-sets}
	The class of polyhedral sets is closed under Cartesian product.
	Moreover, the class of polyhedral sets in $\mathbb{Z}^m$ is closed under finite union, finite intersection and set difference.
\end{proposition}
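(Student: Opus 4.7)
The plan is to reduce the four claimed closure properties to a carefully ordered sequence: first Cartesian product, then intersection, then complement of a basic polyhedral set, and finally derive set difference and union using the Boolean identity $A \cup B = A \sqcup (B \setminus A)$ to enforce disjointness.

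For the Cartesian product of elementary regions $\mathcal{E}_1 \subseteq \mathbb{Z}^m$ and $\mathcal{E}_2 \subseteq \mathbb{Z}^n$, I would pad the coefficient vectors with zeros so that $\mathcal{E}_1 \times \mathbb{Z}^n$ and $\mathbb{Z}^m \times \mathcal{E}_2$ are elementary regions in $\mathbb{Z}^{m+n}$, whence $\mathcal{E}_1 \times \mathcal{E}_2$ is their intersection. Since Cartesian product distributes over (disjoint) unions and intersections, this extends to polyhedral sets. Intersection of two basic polyhedral sets is immediate by concatenating the defining lists of elementary regions; for polyhedral sets $P = \bigsqcup_i B_i$ and $Q = \bigsqcup_j C_j$, one takes $P \cap Q = \bigsqcup_{i,j}(B_i \cap C_j)$, with the disjointness inherited from the disjointness of the $B_i$'s.

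The key technical step is showing that the complement of a polyhedral set is polyhedral, since from this, both set difference ($A \setminus B = A \cap B^c$) and union ($A \cup B = A \sqcup (B \cap A^c)$) fall out using intersection closure. I would start by computing complements of elementary regions by case analysis: $\{a \cdot z = b\}^c = \{a \cdot z > b\} \sqcup \{(-a)\cdot z > -b\}$, $\{a \cdot z > b\}^c = \{a\cdot z = b\} \sqcup \{(-a)\cdot z > -b\}$, and $\{a \cdot z \equiv b \pmod{c}\}^c = \bigsqcup_{b' \in \{0,\dots,c-1\}\setminus\{b \bmod c\}} \{a\cdot z \equiv b' \pmod{c}\}$. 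Each is a disjoint union of elementary regions. For a basic polyhedral set $B = \mathcal{E}_1 \cap \cdots \cap \mathcal{E}_k$, I would partition $B^c$ according to the \emph{least} index at which $z$ fails to lie in some $\mathcal{E}_i$, obtaining
\[
    B^c
    =
    \bigsqcup_{i=1}^{k} \bigl(\mathcal{E}_1 \cap \cdots \cap \mathcal{E}_{i-1} \cap \mathcal{E}_i^c\bigr),
\]
and then expanding each $\mathcal{E}_i^c$ by the previous case analysis and distributing the intersection over the resulting disjoint union. Finally, for $P = \bigsqcup_i B_i$ polyhedral, $P^c = \bigcap_i B_i^c$ is polyhedral by intersection closure.

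I expect the main obstacle to be bookkeeping the disjointness required by the definition of a polyhedral set; this is precisely what the ``least index of failure'' trick in the complement step is designed to handle, and it is also why I prefer to derive the union from set difference rather than directly trying to make $\bigsqcup_i B_i \cup \bigsqcup_j C_j$ disjoint by hand. The remaining arguments are straightforward verifications that the decompositions produced at each stage are genuinely disjoint unions of finite intersections of elementary regions.
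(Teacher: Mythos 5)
Your argument is correct and complete. The paper itself offers no proof of this proposition --- it is quoted from Benson~\cite{benson1983} --- and your chain of reductions (Cartesian product by zero-padding coefficient vectors, intersection by concatenation and by $\bigsqcup_{i,j}(B_i\cap C_j)$, complements of elementary regions by case analysis, complements of basic polyhedral sets via the least-index-of-failure partition, and finally difference and union from intersection and complement) is essentially the standard argument behind the cited result, with the disjointness bookkeeping handled properly at every stage.
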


We say that a map $E \colon \mathbb{Z}^m \to \mathbb{Z}^n$ is an \emph{integer affine transform} if it can be written as $E(v) = vA + b$ where $A \in \mathbb{Z}^{m \times n}$ is a matrix and $b \in \mathbb{Z}^n$ is a vector.
We then have the following additional closure property.

\begin{proposition}[Propositions~13.7~and~13.8~in~\cite{benson1983}]\label{prop:affine-transforms-of-polyhedral-sets}
	Suppose that $\mathcal{P} \subseteq \mathbb{Z}^m$ and $\mathcal{Q} \subseteq \mathbb{Z}^n$ are polyhedral sets, and $E\colon\mathbb{Z}^m\to\mathbb{Z}^n$ is an integer affine transform.
	Then, $E(\mathcal{P})$ and $E^{-1}(\mathcal{Q})$ are both polyhedral sets.
\end{proposition}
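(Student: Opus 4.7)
My plan is to treat the two claims separately. By \cref{prop:closure-properties-of-polyhedral-sets}, polyhedral sets are closed under finite union and intersection; since images distribute over unions and preimages distribute over both unions and intersections, I can reduce each statement to the case where $\mathcal{P}$ or $\mathcal{Q}$ is a single basic polyhedral set, and then to the case of a single elementary region.

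The preimage direction is the easier half. Writing $E(v) = vA + b$, I substitute $w = vA + b$ into the defining condition of each elementary region: the equation $a \cdot w = c$ becomes a linear equation $a' \cdot v = c'$ in $v$ for some $a' \in \mathbb{Z}^m$ and $c' \in \mathbb{Z}$ obtained directly from $a$, $A$, $b$, $c$; if $a' = 0$ the set is either all of $\mathbb{Z}^m$ or empty, both polyhedral, and otherwise it is an elementary region. The strict-inequality and congruence cases are handled identically. Taking intersections then handles basic polyhedral sets, and since preimages automatically preserve disjointness, a disjoint union of basic polyhedral sets pulls back to a disjoint union of basic polyhedral sets.

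The image direction is where the real work lies, because images need not preserve disjointness or the basic polyhedral form. Factoring $E$ as a linear map followed by a translation, the translation part preserves polyhedrality by the same elementary-region substitution as above, so I may assume $E$ is linear, say $L(v) = vA$. The standard trick is to introduce the graph $\Gamma = \{(v, vA) : v \in \mathbb{Z}^m\} \subseteq \mathbb{Z}^{m+n}$, which is polyhedral because it is cut out by linear equations in the coordinates. Then
\[
    L(\mathcal{P})
    =
    \pi\bigl(\Gamma \cap (\mathcal{P} \times \mathbb{Z}^n)\bigr),
\]
where $\pi$ is the projection onto the last $n$ coordinates. Combining this identity with closure under Cartesian product and intersection reduces the problem to showing that coordinate projections of polyhedral sets are polyhedral.

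The main obstacle is therefore the projection lemma: if $\mathcal{P} \subseteq \mathbb{Z}^{k+1}$ is polyhedral then its projection onto the first $k$ coordinates is polyhedral. By the reductions above it suffices to eliminate one variable from a single basic polyhedral set, which is an integer-valued analogue of Fourier--Motzkin elimination. One groups the defining elementary regions by the sign of the coefficient on the eliminated variable and forms pairwise combinations of the resulting lower and upper strict inequalities; equations are used to substitute for the eliminated variable, with the accompanying divisibility conditions recorded as congruences; and collections of congruences are combined via the Chinese Remainder Theorem. The output is a finite (not necessarily disjoint) union of basic polyhedral sets, which is then converted into a \emph{disjoint} union using the set-difference closure from \cref{prop:closure-properties-of-polyhedral-sets}, by iteratively replacing $\mathcal{B}_i$ with $\mathcal{B}_i \setminus (\mathcal{B}_1 \cup \cdots \cup \mathcal{B}_{i-1})$. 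This case analysis in the elimination step, together with the final disjointification, is where the bulk of the bookkeeping lives.
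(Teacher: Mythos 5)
The paper does not prove this proposition at all---it is quoted verbatim from Benson (Propositions~13.7 and~13.8 of~\cite{benson1983})---so your argument is necessarily a different, self-contained route. Your preimage half is complete and correct: substitution of $vA+b$ into each elementary region, the degenerate case $a'=0$, and the fact that preimages preserve disjointness all check out. Your image half is also architecturally sound: translating away $b$, passing to the graph $\Gamma$, and using closure under products and intersections correctly reduces everything to the claim that coordinate projections of polyhedral sets are polyhedral, i.e.\ to eliminating one integer variable from a basic polyhedral set.

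The gap is in that elimination step as you describe it. Forming pairwise combinations of the lower and upper strict bounds on the eliminated variable $z$ is rational Fourier--Motzkin: from $\gamma z > \ell(v)$ and $\delta z < u(v)$ with $\gamma,\delta>0$ it yields the \emph{necessary} condition $\delta\,\ell(v) < \gamma\,u(v)$, but over $\mathbb{Z}$ this is not sufficient for an integer $z$ to exist in the interval. For instance, the basic polyhedral set $\{(v,z)\in\mathbb{Z}^2 \mid 2z - v > 0 \text{ and } v - 2z > -1\}$ is empty, yet the pairwise combination of its two inequalities is the vacuous condition $0 > -1$, so your procedure would return all of $\mathbb{Z}$ as the projection. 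Moreover, your sketch only converts congruences on $z$ into conditions on $v$ when an equation is available to substitute; when $z$ is constrained only by strict bounds together with congruences, the interaction between the two is exactly the hard case and is not addressed. The standard repair (Cooper-style quantifier elimination for Presburger arithmetic) is to add a finite disjunction over the value of $z$ within one period $M=\operatorname{lcm}$ of the relevant coefficients and moduli above each lower bound (or below each upper bound), each candidate contributing further linear and congruence conditions on $v$. With that ingredient supplied, followed by your disjointification via \cref{prop:closure-properties-of-polyhedral-sets}, the argument goes through; without it, the step as written produces a strictly larger set than the true projection.
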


\section{Holonomic Power Series}\label{sec:holonomic-functions}

In this section we provide a brief overview to the theory of \emph{holonomic} functions with a focus on their asymptotic properties.
Some authors use the term \emph{D-finite} to refer to the class of single-variable holonomic functions, or as a synonym for holonomic.
For a more complete introduction the reader is directed to \cite{flajolet2009}.
In \cref{sec:polyhedrally-constrained-languages} we study classes of formal languages with holonomic multivariate generating functions, then in \cref{sec:geodesic-growth} we use such a language class to show that the geodesic growth series of a virtually abelian group is holonomic.

To simplify notation, we write $\mathbb{C}[[\mathbf{x}]]$, $\mathbb{C}[\mathbf{x}]$, $\mathbb{C}((\mathbf{x}))$, and $\mathbb{C}(\mathbf{x})$ for the class of formal power series, polynomials, formal Laurent series, and rational functions, respectively, over the variables $\mathbf{x} = (x_1,x_2,\ldots,x_m)$.
Moreover, we write $\partial_{x_i} f(\mathbf{x})$ for the formal partial derivative of $f(\mathbf{x})$ with respect to $x_i$.

A formal power series $f(\mathbf{x}) \in \mathbb{C}[[\mathbf{x}]]$ is \emph{holonomic} if the span of
\[
	X_f=
	\left\{
		\partial_{x_1}^{k_1} \partial_{x_2}^{k_2} \cdots \partial_{x_m}^{k_m}
		f(\mathbf{x})
	\ \middle\vert\ 
		k_1,k_2,\ldots,k_m \in \mathbb{N}
	\right\}
\]
over $\mathbb{C}(\mathbf{x})$ is a finite-dimensional vector space $V_f \subseteq \mathbb{C}((\mathbf{x}))$.
From this definition, a power series of one variable $f(z) \in \mathbb{C}[[z]]$ is holonomic if and only if
\begin{equation}\label{eq:holonomic-single-variable-defn}
	f^{(k+1)}(z)
	+ r_{k}(z) f^{(k)}(z)
	+ \cdots
	+ r_1(z) f'(z)
	+ r_0(z) f(z)
	=
	0
\end{equation}
for some $k$ where $r_0(z),r_1(z),\ldots,r_{k}(z) \in \mathbb{C}(z)$ are rational functions.

\begin{lemma}\label{lemma:holonomic-power-series-poles}
	If $f(z) \in \mathbb{C}[[z]]$ is a holonomic power series and complex analytic in a neighbourhood of the identity, then $f(z)$ can be analytically extended to all but finitely many points in $\mathbb{C}$, that is, $f(z)$ can have only finitely many poles.
\end{lemma}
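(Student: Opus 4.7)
My plan is to translate the holonomic condition \eqref{eq:holonomic-single-variable-defn} into a linear ODE with polynomial coefficients and then invoke the classical theory of linear ODEs with holomorphic coefficients to continue $f$ analytically away from a controlled finite set.

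To begin, I would clear denominators in \eqref{eq:holonomic-single-variable-defn} by multiplying through by a common denominator of $r_0(z),\ldots,r_k(z)$. This yields an identity
\[
p_{k+1}(z)\, f^{(k+1)}(z) + p_k(z)\, f^{(k)}(z) + \cdots + p_0(z)\, f(z) = 0,
\]
with $p_0,\ldots,p_{k+1} \in \mathbb{C}[z]$ and $p_{k+1}$ not identically zero. Let $S$ denote the zero set of $p_{k+1}$; by the fundamental theorem of algebra, $S$ is finite. Dividing through by $p_{k+1}$ on $\mathbb{C}\setminus S$ produces a linear ODE whose coefficients $-p_j(z)/p_{k+1}(z)$ are holomorphic on this open set. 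The classical existence and uniqueness theorem for linear ODEs with holomorphic coefficients then supplies, for each choice of initial data at any regular point, a unique local holomorphic solution, and moreover this solution extends analytically along every path in $\mathbb{C}\setminus S$. Feeding in the initial data read off from the power series for $f$ at the origin (which converges by hypothesis) and exploiting that $\mathbb{C}\setminus S$ is path-connected, one obtains an analytic extension of $f$ to each point of $\mathbb{C}\setminus S$, so only the finitely many points of $S$ can be singularities of $f$.

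The subtlety I expect to flag is the case $0 \in S$, where the ODE degenerates at the very point where the initial data live. The hypothesis that $f$ is itself analytic in a neighborhood of $0$ rescues the argument: I would choose a regular base point $z_0$ inside the convergence disk of the power series, read off the required values of $f, f', \ldots, f^{(k)}$ at $z_0$ from the Taylor expansion, and continue from $z_0$ along paths in $\mathbb{C}\setminus S$. The continuation obtained is, in general, multi-valued, and the singularities in $S$ need not be poles in the strict sense (they may be branch points); but the lemma's actual content is the \emph{finiteness} of the singular set, which the argument above establishes.
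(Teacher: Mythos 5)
Your proposal is correct and follows essentially the same route as the paper: both identify the finite exceptional set coming from the (rational or, after clearing denominators, polynomial) coefficients of the defining linear ODE, and then invoke the classical existence–uniqueness/analytic-continuation theorem for linear ODEs with holomorphic coefficients to extend $f$ from its disc of analyticity to every point outside that set. Your remarks on basing the continuation at a regular point inside the disc of convergence and on the possible multi-valuedness mirror the paper's choice of a simply connected region meeting the original neighbourhood of analyticity.
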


\begin{proof}
	Let $f(z) \in \mathbb{C}[[z]]$ be a holonomic power series that is complex analytic in the open neighbourhood $N \subseteq \mathbb{C}$ of the identity.
	Since $f(z)$ is holonomic, it must satisfy a differential equation as in (\ref{eq:holonomic-single-variable-defn}) with rational coefficients $r_i(z) \in \mathbb{C}(z)$.
	Let $X$ be the finite set of poles of the rational functions $r_i(z)$.
	Then, it is sufficient to show that $f(z)$ can be analytically extended to each point in $\mathbb{C} \setminus X$.
	
	Let $p \in \mathbb{C} \setminus X$ be chosen arbitrarily.
	Let $R \subseteq \mathbb{C} \setminus X$ be an open and simply connected region for which $p \in R$ and the intersection $R \cap N$ is nonempty.
	Then, each coefficient $r_i(z)$ of (\ref{eq:holonomic-single-variable-defn}) is complex analytic in $R$.
	From the existence and uniqueness theorem of \cite[Theorem~2.2]{wasow1965} we see that $f(z)$ can be analytically extended from $R \cap N$ to $p \in R$.
\end{proof}

The class of holonomic functions enjoys many interesting closure properties, however, in this paper we only require the following.

\begin{lemma}[Proposition~2.3~in~\cite{lipshitz1989}]\label{lemma:holonomic-closure-properties}
	The class of holonomic functions in $\mathbb{C}[[\mathbf{x}]]$ is closed under addition and multiplication.
	If $f(\mathbf{x}) \in \mathbb{C}[[\mathbf{x}]]$ is holonomic where $\mathbf{x} = (x_1,x_2,\ldots,x_m)$, and $a_1(\mathbf{y}), a_2(\mathbf{y}), \ldots, a_m(\mathbf{y}) \in \mathbb{C}[[\mathbf{y}]]$ are algebraic where $\mathbf{y} = (y_1,y_2,\ldots,y_n)$, then $g(\mathbf{y}) = f(a_1(\mathbf{y}),a_2(\mathbf{y}),\ldots,a_m(\mathbf{y}))$ is holonomic if it is defined.
	Moreover, each algebraic function is holonomic.
\end{lemma}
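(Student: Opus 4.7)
The plan is to work throughout from the definition: a power series is holonomic exactly when the $\mathbb{C}(\mathbf{x})$-span $V_f$ of its mixed partial derivatives is finite-dimensional. To prove each closure property I will exhibit, for the candidate series $h$, an explicit finite spanning set for $V_h$; the existence of such a set forces $\dim_{\mathbb{C}(\mathbf{x})} V_h < \infty$, which is the holonomicity of $h$.

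For sums, linearity of $\partial_{x_i}$ gives $V_{f+g} \subseteq V_f + V_g$, so $\dim V_{f+g} \leqslant \dim V_f + \dim V_g$. For products, the Leibniz rule expands each $\partial^{\boldsymbol{\alpha}}(fg)$ as a $\mathbb{C}$-linear combination of terms $\partial^{\boldsymbol{\beta}}f \cdot \partial^{\boldsymbol{\gamma}}g$; choosing bases $\{u_i\}$ of $V_f$ and $\{v_j\}$ of $V_g$, the finitely many products $u_i v_j$ span $V_{fg}$ over $\mathbb{C}(\mathbf{x})$. For the final claim (algebraic implies holonomic), suppose $a(\mathbf{y})$ satisfies $P(a,\mathbf{y}) = 0$ with $P$ of degree $d$ in its first slot and $\partial_T P(a,\mathbf{y}) \neq 0$. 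Implicit differentiation yields $\partial_{y_i} a = -(\partial_{y_i} P)/(\partial_T P)$ evaluated at $a$, which lies in $\mathbb{C}(\mathbf{y})[a]$; iterating, every mixed partial of $a$ sits in the $d$-dimensional $\mathbb{C}(\mathbf{y})$-space spanned by $1, a, \ldots, a^{d-1}$, so $a$ is holonomic.

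The main obstacle is the substitution statement, because after applying the chain rule we naturally live in an algebraic extension of $\mathbb{C}(\mathbf{y})$ rather than in $\mathbb{C}(\mathbf{y})$ itself. Concretely, the chain rule gives
\[
	\partial_{y_i} g(\mathbf{y})
	=
	\sum_{j=1}^m (\partial_{x_j} f)(\mathbf{a}(\mathbf{y})) \cdot \partial_{y_i} a_j(\mathbf{y}),
\]
and by induction every mixed partial of $g$ is a $\mathbb{C}(\mathbf{y})[\mathbf{a}]$-linear combination of the composed derivatives $(\partial^{\boldsymbol{\alpha}} f)(\mathbf{a}(\mathbf{y}))$. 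The key step is to bound both ingredients: the composed derivatives span a finite-dimensional space over $\mathbb{C}(\mathbf{a}(\mathbf{y}))$ because $f$ is holonomic, and the coefficient ring $\mathbb{C}(\mathbf{y})[\mathbf{a}]$ is a finite algebraic extension of $\mathbb{C}(\mathbf{y})$ by the previous paragraph.

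Putting these together, the partials of $g$ lie in a finitely generated module over $\mathbb{C}(\mathbf{y})[\mathbf{a}]$, hence in a finite-dimensional $\mathbb{C}(\mathbf{y})$-vector space by the tower law for field extensions. This tower step is the delicate point, since it converts a finiteness statement over an extension into one over the base field, and ensures that $V_g$ itself, not merely its extension of scalars, has finite dimension. Once this is in hand, holonomicity of $g$ follows, completing the lemma.
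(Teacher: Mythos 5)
The paper does not actually prove this lemma; it quotes it from Lipshitz, so you are supplying an argument where the author supplies only a citation. Your proofs of closure under addition and multiplication, and of ``algebraic implies holonomic,'' are correct and standard. The substitution statement, however, contains a genuine gap, and it is not where you locate it. The tower step you single out as delicate is routine: a finitely generated module over the field $L=\mathbb{C}(\mathbf{y})[\mathbf{a}]$ is a finite-dimensional $L$-space, hence finite-dimensional over $\mathbb{C}(\mathbf{y})$ because $[L:\mathbb{C}(\mathbf{y})]<\infty$. The real difficulty is the step you dispatch with ``because $f$ is holonomic,'' namely the claim that the composed derivatives $(\partial^{\alpha}f)(\mathbf{a}(\mathbf{y}))$ span a finite-dimensional space over $\mathbb{C}(\mathbf{a}(\mathbf{y}))$.

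Holonomicity of $f$ gives, for any $N+1$ derivatives (with $N=\dim V_f$), a relation $\sum_i p_i(\mathbf{x})\,\partial^{\alpha_i}f=0$ with polynomial coefficients not all zero. To transfer this to the composed derivatives you must evaluate the $p_i$ at $\mathbf{x}=\mathbf{a}(\mathbf{y})$, and the resulting relation is vacuous whenever every $p_i$ vanishes identically on the Zariski closure $V$ of the image of $\mathbf{a}$. Once $V\subsetneq\mathbb{C}^m$ nothing in the definition of holonomicity rules this out: even after dividing out common factors, all coefficients of every available relation can lie in the prime ideal of $V$ when $V$ has codimension at least two (the saturated rank-one syzygy module generated by $(x_1,x_2)$ inside $\mathbb{C}[x_1,x_2]^2$ already shows the phenomenon). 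The ``dominant image'' case, where your specialization is harmless because no nonzero polynomial dies under the substitution, is exactly the case that does \emph{not} occur in this paper: in \cref{thm:geodesic-growth} one substitutes $x_i\mapsto z^{a_{\pi,i}}$, whose image is a curve in $\mathbb{C}^{|\Sigma|}$. Handling this degeneration is the actual content of Lipshitz's proposition; a correct argument must either produce annihilating relations whose coefficients do not all vanish on $V$ or appeal to restriction results for holonomic modules, and your proof as written does neither.
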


\subsection{Rational power series}\label{sec:holonomic-functions/rational}

Let $f(z) = \sum_{n=0}^\infty c_n z^n$ be a rational power series, then the growth of the sequence $(c_n)_{n=0}^\infty$ can be computed with the use of the following lemma.

\begin{lemma}[Theorem~IV.9~in~\cite{flajolet2009}]\label{lemma:rational-polynomial-exponential}
	Suppose that $f(z) = \sum_{n=0}^\infty c_n z^n$ is rational with poles at $\alpha_1,\alpha_2,\ldots,\alpha_k \in \mathbb{C}$.
	Then there are polynomials $p_i(n) \in \mathbb{C}[n]$ such that for each sufficiently large $n$ we have $c_n = \sum_{j = 0}^k p_j(n) \alpha_j^{-n}$.
\end{lemma}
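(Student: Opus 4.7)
The plan is to prove this via the standard partial fraction decomposition of rational functions together with the generalised binomial series. Writing $f(z) = P(z)/Q(z)$ with $P,Q \in \mathbb{C}[z]$ coprime, the poles of $f$ are exactly the roots $\alpha_1,\ldots,\alpha_k$ of $Q(z)$, say with multiplicities $d_1,\ldots,d_k$. First, I would perform polynomial division to write $f(z) = E(z) + R(z)/Q(z)$ where $E(z) \in \mathbb{C}[z]$ is a polynomial and $\deg R < \deg Q$; the contribution of $E(z)$ to $c_n$ vanishes once $n$ exceeds $\deg E$, so it can be ignored for sufficiently large $n$.

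Next, I would apply partial fraction decomposition to $R(z)/Q(z)$ to obtain an expression of the form
\[
	\frac{R(z)}{Q(z)}
	=
	\sum_{j=1}^{k}
	\sum_{m=1}^{d_j}
	\frac{A_{j,m}}{(1 - z/\alpha_j)^m}
\]
for suitable constants $A_{j,m} \in \mathbb{C}$, after factoring each $(\alpha_j - z)^m = \alpha_j^m (1 - z/\alpha_j)^m$ and absorbing the $\alpha_j^m$ into the numerators. Then I would expand each summand using the generalised binomial series identity
\[
	\frac{1}{(1 - z/\alpha_j)^m}
	=
	\sum_{n=0}^{\infty}
	\binom{n + m - 1}{m - 1} \alpha_j^{-n} z^n,
\]
and note that $\binom{n+m-1}{m-1}$ is a polynomial in $n$ of degree $m - 1$.

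Finally, I would collect coefficients of $z^n$ across all terms. For each pole $\alpha_j$, summing over $m = 1,\ldots,d_j$ with scalar coefficients $A_{j,m}$ produces $q_j(n)\, \alpha_j^{-n}$ where $q_j(n) \in \mathbb{C}[n]$ is a polynomial of degree at most $d_j - 1$. Summing over $j$ and incorporating the polynomial $E(z)$ into $c_n$ for $n \leqslant \deg E$ (which is irrelevant for the ``sufficiently large $n$'' conclusion), I obtain $c_n = \sum_{j=1}^{k} q_j(n)\, \alpha_j^{-n}$ for all sufficiently large $n$, giving the polynomials $p_j = q_j$ claimed in the statement.

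There is no real obstacle here; the argument is entirely mechanical once the partial fraction decomposition is set up. The only minor bookkeeping concern is handling the polynomial quotient $E(z)$ correctly, which is why the statement is phrased as holding for sufficiently large $n$ rather than all $n \geqslant 0$.
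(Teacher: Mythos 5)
Your proof is correct; the paper does not prove this lemma but simply cites it as Theorem~IV.9 of Flajolet--Sedgewick, and your partial-fraction/binomial-series argument is precisely the standard proof given in that reference (the only implicit point worth noting is that $\alpha_j \neq 0$ for every pole, since $f$ is analytic at the origin, so dividing by $\alpha_j$ is legitimate).
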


If $(c_n)_{n\in \mathbb{N}}$ is an integer sequence with $\lim_{n\to\infty} \sqrt[n]{|c_n|} = 1$, then the power series $f(z) = \sum_{n=0}^\infty c_n z^n$ is complex analytic in the open unit disc.
For such a power series, we may apply the P\'olya-Carlson theorem, given in \cref{lemma:polya-carlson}.

\begin{lemma}[Carlson~{\cite[p.~3]{carlson1921}}]\label{lemma:polya-carlson}
	If $f(z)$ is a power series with integer coefficients that is complex analytic in the open unit disc, then $f(z)$ is either rational or has the unit circle as its natural boundary.
\end{lemma}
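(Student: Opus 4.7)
The plan is to invoke Kronecker's characterisation of rational power series via Hankel determinants and combine it with the integrality of the coefficients. Recall that $f(z) = \sum_{n=0}^\infty c_n z^n$ is rational if and only if there exists some $k \in \mathbb{N}$ such that the Hankel determinants
\[
	H_n^{(k)} = \det\bigl( c_{i+j+k} \bigr)_{0 \leqslant i,j \leqslant n}
\]
vanish for every sufficiently large $n$. Since the $c_n$ are integers, each $H_n^{(k)}$ is an integer, so it suffices to show that if $f(z)$ does not have the unit circle as a natural boundary, then $H_n^{(0)} \to 0$ as $n \to \infty$; integrality then forces $H_n^{(0)} = 0$ for all large $n$, yielding rationality.

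The second step is to exploit the assumed analytic continuation. Suppose the unit circle is not a natural boundary. Then there is an open arc $\Gamma$ on $|z|=1$ across which $f$ extends holomorphically, so $f$ is analytic on a simply connected domain $D$ that strictly contains the closed unit disc minus some closed arc disjoint from $\Gamma$. I would then represent $H_n^{(0)}$ as a multidimensional contour integral obtained from the Cauchy formula $c_m = \frac{1}{2\pi i}\oint \frac{f(z)}{z^{m+1}}\,dz$ applied in each row of the determinant, so that
\[
	H_n^{(0)}
	=
	\frac{1}{(2\pi i)^{n+1}}
	\oint \cdots \oint
	\frac{\prod_{i<j}(z_i - z_j)^2 \, f(z_0) \cdots f(z_n)}
	{(z_0 z_1 \cdots z_n)^{n+1}}
	\, dz_0 \cdots dz_n,
\]
via the standard Vandermonde identity for Hankel determinants of moments.

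The third step is to deform each contour from the unit circle into $D$ through the arc $\Gamma$, obtaining a new contour that passes partly outside $|z|=1$. Using a conformal map that sends the relevant region to a disc of radius $\rho < 1$, one obtains the estimate $|H_n^{(0)}| \leqslant C \, \rho^{n^2}$ for some constants $C > 0$ and $\rho \in (0,1)$. Since the Vandermonde factor and the power $(z_0 \cdots z_n)^{-(n+1)}$ together contribute a leading term of order $n^2$ in the exponent, the small gain on each contour compounds to give super-exponential decay in $n$, so $H_n^{(0)} \to 0$.

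The principal obstacle is the third step: constructing the contour deformation and carrying out the conformal estimate that produces the $\rho^{n^2}$ bound. The Vandermonde factor grows like $\rho^{-n^2}$ on the unit circle, so any estimate on a single contour is far too weak; the improvement must come uniformly from moving every variable through $\Gamma$ simultaneously, which is the content of Carlson's original calculation. I would therefore cite the classical exposition for this estimate rather than reprove it, and emphasise that integrality of the $H_n^{(0)}$ is what converts the analytic decay into an exact vanishing, which is the point at which the integer hypothesis is used.
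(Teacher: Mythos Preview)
The paper does not prove this lemma; it is quoted as a classical result with a bare citation to Carlson~\cite{carlson1921}. There is therefore no argument in the paper to compare your sketch against.

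On its own merits, your outline is essentially Carlson's original strategy: Kronecker's rationality criterion via Hankel determinants, integrality forcing eventual vanishing once one has decay, and an integral representation of $H_n^{(0)}$ whose contours can be pushed through an arc of analytic continuation. Two small corrections: the Andr\'eief/Heine identity gives a factor of $1/(n+1)!$ in front, and the denominator should be $(z_0\cdots z_n)^{2n+1}$ rather than $(z_0\cdots z_n)^{n+1}$, since the Vandermonde in the reciprocals contributes an extra $\prod_k z_k^{-2n}$. Neither error affects the scheme of the argument.

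You are right that the substantive content is the $\rho^{n^2}$ estimate after deformation, and that it does not follow from bounding each contour separately; the gain must be extracted from the full Vandermonde interaction. Since you already plan to cite the classical source for that step, your write-up would in effect reproduce the citation the paper makes, only with more scaffolding around it. If the goal is simply to use the lemma, citing Carlson (or a modern exposition such as Remmert's \emph{Classical Topics in Complex Function Theory}) is the standard and appropriate move.
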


From this we obtain the following characterisation of groups with holonomic geodesic growth series.

\begin{corollary}\label{cor:dichotomy-of-characterisation}
	Let $G$ be a group with a finite (weighted monoid) generating set $S$.
	If the geodesic growth series $f_S(z) = \sum_{n=0}^\infty \gamma_S(n) z^n$ is holonomic, then $G$ either has exponential geodesic growth with respect to $S$, or  $f_S(z)$ is rational and $G$ has polynomial geodesic growth with respect to $S$.
\end{corollary}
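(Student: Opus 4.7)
The plan is to split into two cases based on the growth rate $\alpha_S = \lim_{n\to\infty}\sqrt[n]{\gamma_S(n)}$, which exists by Fekete's lemma applied to the submultiplicative sequence $\gamma_S$. Submultiplicativity combined with $\gamma_S(0) = 1$ moreover yields $\gamma_S(n) \geq \alpha_S^n$ for every $n \geq 0$, so in the case $\alpha_S > 1$ the constant $\alpha = \alpha_S$ directly witnesses exponential geodesic growth. The substantive case is therefore $\alpha_S = 1$, where I will show that $f_S(z)$ is rational with polynomially bounded coefficients.

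Suppose $\alpha_S = 1$. Then $f_S(z)$ has radius of convergence $1$ and so is complex analytic on the open unit disc. Its coefficients are nonnegative integers, so the Pólya–Carlson theorem (\cref{lemma:polya-carlson}) forces $f_S$ to be either rational or to admit the unit circle as a natural boundary. Since $f_S$ is holonomic by hypothesis, \cref{lemma:holonomic-power-series-poles} produces an analytic continuation of $f_S$ to all of $\mathbb{C}$ except for a finite set of poles; this contradicts the existence of a natural boundary on the unit circle, so $f_S(z)$ must be rational. Each pole $\alpha_j$ of $f_S$ then satisfies $|\alpha_j| \geq 1$, because otherwise the radius of convergence would drop below $1$. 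Applying \cref{lemma:rational-polynomial-exponential} gives polynomials $p_j \in \mathbb{C}[n]$ with $\gamma_S(n) = \sum_j p_j(n) \alpha_j^{-n}$ for all sufficiently large $n$, and since $|\alpha_j^{-n}| \leq 1$ and each $p_j(n)$ is polynomial in $n$, the quantity $\gamma_S(n)$ is polynomially bounded, establishing polynomial geodesic growth.

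The most delicate step will be ruling out the natural-boundary alternative in the Pólya–Carlson dichotomy: a natural boundary on the unit circle means analytic continuation is obstructed on a dense subset of that circle, whereas \cref{lemma:holonomic-power-series-poles} concentrates all obstructions at a finite set of points, so the two options are mutually exclusive. Once this incompatibility is cleanly stated, the remainder consists of routine applications of the lemmas above and of the explicit asymptotic expansion for coefficients of rational generating functions.
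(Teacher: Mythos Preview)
Your proposal is correct and follows essentially the same route as the paper: split on the growth rate $\alpha_S$ via Fekete's lemma, and in the $\alpha_S=1$ case combine P\'olya--Carlson with \cref{lemma:holonomic-power-series-poles} to force rationality, then invoke \cref{lemma:rational-polynomial-exponential} for the polynomial bound. You simply spell out details the paper leaves implicit, such as why $\gamma_S(n)\geqslant\alpha_S^n$ and why the poles of the rational $f_S$ satisfy $|\alpha_j|\geqslant 1$.
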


\begin{proof}
As we mentioned in \cref{sec:notation}, from Fekete's Lemma~\cite{fekete1923} we know that either the geodesic growth function $\gamma_S(n)$ has exponential growth, or the geodesic growth rate $\alpha_S = \lim_{n \to \infty} \sqrt[n]{\gamma_S(n)} = 1$.
In the latter case we apply \cref{lemma:holonomic-power-series-poles,lemma:polya-carlson} to show that  $f_S(z)$ is rational, then from \cref{lemma:rational-polynomial-exponential} we see that there is a polynomial upper bound on $\gamma_S(n)$.
\end{proof}

\section{Polyhedrally Constrained Languages}\label{sec:polyhedrally-constrained-languages}

Massazza~\cite{massazza1993} studied the class of \emph{linearly constrained languages}, and showed that their multivariate generating functions are holonomic.
Informally, a \emph{linearly constrained language} is the intersection of an unambiguous context-free language and the set of words whose Parikh images satisfy a set of linear constraints.
In this section we generalise this result by instead requiring that the Parikh images belong to a polyhedral set, we refer to such languages as \emph{polyhedrally constrained}.

In \cref{sec:polyhedrally-constrained-languages/formal-language} we recall the definition of multivariate generating functions for formal languages, and in \cref{sec:polyhedrally-constrained-languages/constrained-language} we define the families of linearly and polyhedrally constrained languages.
We then show that polyhedrally constrained languages have holonomic multivariate generating functions in \cref{prop:polyhedrally-constrained-is-holonomic} by decomposing such languages into a union of finitely many disjoint linearly constrained languages.

To introduce the family of linearly constrained languages we first define what it means for a subset of $\mathbb{Z}^n$ to be an \emph{$n$-constraint}.
Modifying the notation of Massazza~\cite{massazza1993}, a subset of $\mathbb{Z}^{n}$ is an \emph{$n$-atom} if it can be expressed as
\[
	\{
		v \in \mathbb{Z}^{n}
	\mid
		a \cdot v = b
	\}
	\quad
	\text{or}
	\quad 
	\{
		v \in \mathbb{Z}^{n}
	\mid
		a \cdot v > b
	\}
\]
where $a \in \mathbb{Z}^{n}$ and $b \in \mathbb{Z}$.
An \emph{$n$-constraint} is a Boolean expression of $n$-atoms, that is, a finite expression of $n$-atoms using intersection, union, and complement with respect to $\mathbb{Z}^{n}$.
For example,
\[
	\{
		(x,y) \in \mathbb{Z}^{2}
	\mid
		\text{either }
		x = 1 \text{ and } y > 10, \text{ or }
		x \neq 1\text{ and }2x-3y>4
	\}
\]
is a $2$-constraint as it can be written as the Boolean expression
\begin{multline*}
		\{ v \in \mathbb{Z}^{2} \mid (1,0)\cdot v = 1 \} \cap
		\{ v \in \mathbb{Z}^{2} \mid (0,1)\cdot v > 10 \}\\
	\cup
		\left(
		\mathbb{Z}^{2}\setminus
		\{ v \in \mathbb{Z}^{2} \mid (1,0)\cdot v = 1 \}
		\right)
		\cap
		\{ v \in \mathbb{Z}^{2} \mid (2,-3)\cdot v > 4 \}.
\end{multline*}

Notice that our definition of $n$-atoms is similar to that of elementary regions, except we do not allow modular arithmetic.
From the closure properties in \cref{prop:closure-properties-of-polyhedral-sets} we see that $n$-constraints form a subclass of the polyhedral subsets of $\mathbb{Z}^n$.
It can be verified by the reader, for each $n \geqslant 1$, that
\[
	\{
		(x,0,0,\ldots,0) \in \mathbb{Z}^n
	\mid
		x \equiv 0\ (\bmod\ 2)
	\}
\]
is a polyhedral set but not an $n$-constraint, and thus the class of $n$-constraints form a proper subclass of the polyhedral sets.

\subsection{Formal language generating functions}\label{sec:polyhedrally-constrained-languages/formal-language}

Let $\Sigma = \{\sigma_1, \sigma_2, \ldots, \sigma_m \}$ be an ordered alphabet, then the \emph{Parikh map} for words in $\Sigma^*$ is the monoid homomorphism $\Phi \colon \Sigma^* \to \mathbb{N}^{|\Sigma|}$ defined such that $\Phi(\sigma_i) = e_i \in \mathbb{N}^{|\Sigma|}$ for each $\sigma_i \in \Sigma$ where $e_i$ is the $i$-th standard basis element.
Then for each word $w \in \Sigma^*$ we have
\[
	\Phi(w)
	=
	(
		|w|_{\sigma_1},
		|w|_{\sigma_2},
		\ldots,
		|w|_{\sigma_m}
	)
\]
where each $|w|_{\sigma_i}$ counts the number of occurrences of the letter $\sigma_i$ in $w$.

The \emph{multivariate generating function} of a language $L \subseteq \Sigma^*$ is given by
\[
	f(x_1,x_2,\ldots,x_m)
	=
	\sum_{i_1,i_2,\ldots,i_m \in \mathbb{N}}
		c(i_1,i_2,\ldots,i_m)
		x_1^{i_1} x_2^{i_2} \cdots x_m^{i_m}
\]
where each coefficient $c(i_1,i_2,\ldots,i_m)$ is given as
\[
	c(i_1,i_2,\ldots,i_m)
	=
	\#
	\left\{
		w \in L
	\mid
		\Phi(w) = (i_1,i_2,\ldots,i_m)
	\right\}.
\]

We may now define constrained languages as follows.

\subsection{Constrained languages}\label{sec:polyhedrally-constrained-languages/constrained-language}

Let $U \subseteq \Sigma^*$ be an unambiguous context-free language, and let $\mathcal{C} \subseteq \mathbb{Z}^{|\Sigma|}$ be a subset of $\mathbb{Z}^{|\Sigma|}$, then we say that
\[
	L(U,\mathcal{C}) = \{ w \in U \mid \Phi(w) \in \mathcal{C} \}
\]
is a \emph{constrained language}.
If the set $\mathcal{C}$ is a $|\Sigma|$-constraint then we say that $L(U,\mathcal{C})$ is a \emph{linearly constrained language}.
Moreover, if $\mathcal{C}$ is a polyhedral set, as in \cref{sec:polyhedral-sets}, then we say that $L(U,\mathcal{C})$ is a \emph{polyhedrally constrained language}.

\begin{remark}\label{rmk:assume-constraint-is-nonnegative}
	Notice that $\Phi(w) \in \mathbb{N}^{|\Sigma|}$ for each word $w \in L(U,\mathcal{C}) \subseteq \Sigma^*$, and thus $L(U,\mathcal{C}) = L(U, \mathcal{C} \cap \mathbb{N}^{|\Sigma|})$.
\end{remark}

Massazza~\cite{massazza1993} showed the following result for linearly constrained languages.

\begin{proposition}[Theorem~2~in~\cite{massazza1993}]\label{prop:lcl-is-holonomic}
	The multivariate generating function of a linearly constrained language is holonomic.
\end{proposition}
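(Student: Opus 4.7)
By the Chomsky--Schützenberger theorem the commutative image of an unambiguous context-free language is algebraic, so $f_U(\mathbf{x})$ is algebraic and hence holonomic by \cref{lemma:holonomic-closure-properties}. It therefore suffices to show that restricting the Parikh image of $U$ to any $|\Sigma|$-constraint $\mathcal{C}$ preserves holonomicity. Using De Morgan together with the integer identity $a\cdot v \leqslant b \Leftrightarrow -a\cdot v > -b-1$, the complement of any atom can be rewritten as a union of atoms, so $\mathcal{C}$ can be put into disjunctive normal form as a finite union of intersections of atoms. Inclusion--exclusion then expresses the generating function of such a union as a signed integer combination of generating functions of intersections of atoms, and since the holonomic class is closed under addition, it is enough to show that imposing a single atomic constraint on $f_U$ preserves holonomicity and then iterate.

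For an equality atom $\{v \mid a\cdot v = b\}$, decompose $a = a^+ - a^-$ with $a^\pm \in \mathbb{N}^{|\Sigma|}$, introduce fresh variables $y_+,y_-$, and substitute $x_i \mapsto x_i y_+^{a_i^+} y_-^{a_i^-}$ in $f_U$. The result is holonomic in $(\mathbf{x},y_+,y_-)$ by the algebraic-substitution closure in \cref{lemma:holonomic-closure-properties}, and extracting the sum of coefficients of monomials $y_+^p y_-^q$ with $p-q=b$ gives the generating function of $\{w\in U \mid a\cdot\Phi(w)=b\}$, a shifted diagonal of the substituted series and therefore holonomic by a classical theorem of Lipshitz. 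For a strict-inequality atom $\{v \mid a\cdot v > b\}$, introduce a slack variable: the condition is equivalent to the existence of $s \in \mathbb{N}$ with $a \cdot v - s = b+1$. Replace $f_U(\mathbf{x})$ by the holonomic product $f_U(\mathbf{x})/(1-t)$, apply the previous equality construction to the augmented coefficient vector $(a,-1)$ acting on $(v,s)$, and specialise the resulting series at $t=1$; since each monomial $\mathbf{x}^v$ in the intermediate series contributes only a single power of $t$, this specialisation is a formal operation that is well-defined and preserves holonomicity.

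The main obstacle is the closure of the holonomic class under diagonal extraction and under formal specialisation of an auxiliary variable; neither closure is listed in the excerpt, but both are standard results in the theory of holonomic (D-finite) series (see \textit{e.g.}~\cite{lipshitz1989}). Granted these closures, reassembling the signed combination from the disjunctive normal form yields a holonomic multivariate generating function for $L(U,\mathcal{C})$.
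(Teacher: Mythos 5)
There is nothing in the paper to compare your argument against: \cref{prop:lcl-is-holonomic} is imported verbatim as Theorem~2 of Massazza~\cite{massazza1993} and is deliberately left as a black box; the paper's own contribution at this point is the extension to polyhedral constraints in \cref{prop:polyhedrally-constrained-is-holonomic}, which reduces the extra modular atoms to the linearly constrained case via a regular-language intersection. So you are reconstructing Massazza's proof, not the paper's, and your reconstruction is the right kind of argument and close in spirit to the original: start from the algebraic Chomsky--Sch\"utzenberger series of the unambiguous grammar, reduce a Boolean combination of atoms to single atoms by normal form and inclusion--exclusion (note the DNF terms need not be disjoint, so you genuinely need the inclusion--exclusion over intersections of terms, which is fine since those are again intersections of atoms), and realise each atomic restriction as a support-restriction operator on the series, implemented by constant-term/diagonal extraction in auxiliary variables. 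Iterating atom by atom is legitimate because each step acts on an arbitrary holonomic series, not just an algebraic one.

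The caveat is that, as written, this is a proof contingent on two closure properties that appear nowhere in the paper and are asserted rather than proved: Lipshitz's theorem that diagonals (equivalently, residues or constant-term extractions in an auxiliary variable) of D-finite series are D-finite, and a specialisation lemma for setting $t=1$. The first is a genuine theorem of \cite{lipshitz1989} and your use of it for the equality atom is sound, modulo the bookkeeping that $g(\mathbf{x},t,t^{-1})$ lives outside $\mathbb{C}[[\mathbf{x},t]]$ and must be handled as in Lipshitz's residue formalism. The second is the weak point: well-definedness of $f(\mathbf{x},1)$ coefficientwise does not by itself give D-finiteness of the specialisation, since $t=1$ may be a singular point of the annihilating system, and \cref{lemma:holonomic-closure-properties} does not cover substitution of a series with nonzero constant term. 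This step is avoidable: instead of introducing the factor $1/(1-t)$ and later specialising, extract the strict-inequality restriction directly as the constant term in $t$ of $t^{-b-1}g(\mathbf{x},t,t^{-1})/(1-t^{-1})$ (a ``positive part'' operator), which stays entirely within the diagonal/residue closures. With that repair, and with the diagonal theorem granted, your sketch is a correct proof of the proposition.
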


We extend this to the class of polyhedrally constrained languages as follows.

\begin{proposition}\label{prop:polyhedrally-constrained-is-holonomic}
	The multivariate generating function of a polyhedrally constrained language is holonomic.
\end{proposition}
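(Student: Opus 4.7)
The plan is to reduce the polyhedrally constrained case to the linearly constrained case of \cref{prop:lcl-is-holonomic} and invoke closure of the holonomic class under addition (\cref{lemma:holonomic-closure-properties}). The essential new feature of a polyhedral set, compared to an $|\Sigma|$-constraint, is the modular elementary regions $\{z \mid a\cdot z \equiv b \pmod{c}\}$, so the main task is to eliminate these modular conditions by stratifying the language according to the residues of Parikh vectors modulo a common modulus.

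First, I would write $\mathcal{P}$ as a finite disjoint union $\bigsqcup_i \mathcal{P}_i$ of basic polyhedral sets, which gives $L(U,\mathcal{P}) = \bigsqcup_i L(U,\mathcal{P}_i)$, so it suffices to handle a single basic piece $\mathcal{P}_i$. Split the defining elementary regions of $\mathcal{P}_i$ into the non-modular ones, whose intersection $\mathcal{A}_i$ is an $|\Sigma|$-constraint, and the modular ones $a_j \cdot z \equiv b_j \pmod{c_j}$ for $j = 1,\ldots,k$. Let $C$ be a common multiple of the $c_j$, and for each residue vector $r \in \{0,1,\ldots,C-1\}^{|\Sigma|}$ set $R_r = \{w \in \Sigma^* \mid \Phi(w) \equiv r \pmod{C}\}$. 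Each $R_r$ is regular, being recognised by a DFA with state space $\{0,\ldots,C-1\}^{|\Sigma|}$ that tracks Parikh vectors modulo $C$, and $\Sigma^* = \bigsqcup_r R_r$. Hence $U_r := U \cap R_r$ is unambiguous context-free via the standard triple/product construction that intersects an unambiguous context-free grammar with a DFA. The key observation is that $a_j \cdot z \bmod c_j$ is constant as $z$ ranges over a single residue class, so on each $R_r$ either every modular condition of $\mathcal{P}_i$ is satisfied, in which case membership in $\mathcal{P}_i$ reduces to membership in $\mathcal{A}_i$, or some condition fails and $L(U_r, \mathcal{P}_i) = \emptyset$. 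Either way, $L(U_r, \mathcal{P}_i)$ is (empty or equal to) the linearly constrained language $L(U_r, \mathcal{A}_i)$.

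Combining over all $i$ and $r$ exhibits $L(U,\mathcal{P})$ as a finite disjoint union of linearly constrained languages, so its multivariate generating function is the corresponding finite sum of holonomic functions by \cref{prop:lcl-is-holonomic}, and therefore holonomic by \cref{lemma:holonomic-closure-properties}. The main technical point will be verifying that each $U_r = U \cap R_r$ is unambiguous context-free; this relies on the (standard) fact that the product construction of a context-free grammar with a DFA preserves unambiguity, which I would cite rather than reprove.
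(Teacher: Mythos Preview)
Your proposal is correct and follows essentially the same approach as the paper: decompose $\mathcal{P}$ into basic pieces, separate the modular from the non-modular constraints, absorb the modular constraints into a regular language intersected with $U$ (preserving unambiguity), and apply \cref{prop:lcl-is-holonomic} to each resulting linearly constrained language before summing. The only cosmetic difference is that you stratify by the full residue class of $\Phi(w)$ modulo a common multiple $C$, whereas the paper builds a single regular language $R_i$ per basic piece by tracking only the relevant linear forms $\zeta_{i,j}\cdot\Phi(w)$ modulo $\theta_{i,j}$; your decomposition is finer but the idea is identical.
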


\begin{proof}

Let $L(U,\mathcal{P}) \in \Sigma^*$ be a polyhedrally constrained language.
From the definition of polyhedral sets in \cref{sec:polyhedral-sets}, we may decompose $\mathcal{P}$ into a finite union, $\mathcal{P} = \bigcup_{i=1}^L \mathcal{B}_i$, of disjoint basic polyhedral sets.
Moreover, each such basic polyhedral set $\mathcal{B}_i \subseteq \mathbb{Z}^{|\Sigma|}$ can be written as a finite intersection of the form
\begin{multline*}
	\mathcal{B}_i
	=
	\bigcap_{j = 1}^{K_{i,1}}
		\{
			v \in \mathbb{Z}^{|\Sigma|}
		\mid
			\alpha_{i,j} \cdot v = \beta_{i,j}
		\}
	\cap
	\bigcap_{j = 1}^{K_{i,2}}
		\{
			v \in \mathbb{Z}^{|\Sigma|}
		\mid
			\xi_{i,j} \cdot v > \lambda_{i,j}
		\}
	\\\cap
	\bigcap_{j = 1}^{K_{i,3}}
		\{
			v \in \mathbb{Z}^{|\Sigma|}
		\mid
			\zeta_{i,j} \cdot v \equiv \eta_{i,j}\ (\bmod\ \theta_{i,j})
		\}
\end{multline*}
where each $\alpha_{i,j},\xi_{i,j},\zeta_{i,j} \in \mathbb{Z}^{|\Sigma|}$, each $\beta_{i,j},\lambda_{i,j}, \eta_{i,j} \in \mathbb{Z}$, and each $\theta_{i,j} \in \mathbb{N}_+$.

From the definition of constrained language we see that $L(U,\mathcal{P})$ is the union of disjoint constrained languages $L(U,\mathcal{B}_i)$.
We see that if each $L(U,\mathcal{B}_i)$ has a multivariate generating function of $f_i(x_1,x_2,\ldots,x_{|\Sigma|})$, then the multivariate generating function for $L(U,\mathcal{P})$ is given by
\[
	f(x_1,x_2,\ldots,x_{|\Sigma|})
	=
	\sum_{i=1}^L f_i(x_1,x_2,\ldots,x_{|\Sigma|}).
\]

For each basic polyhedral set $\mathcal{B}_i$, we introduce a $|\Sigma|$-constraint
\[
	\mathcal{C}_i
	=
	\bigcap_{j = 1}^{K_{i,1}}
		\{
			v \in \mathbb{Z}^{|\Sigma|}
		\mid
			\alpha_{i,j} \cdot v = \beta_{i,j}
		\}
	\cap
	\bigcap_{j = 1}^{K_{i,2}}
		\{
			v \in \mathbb{Z}^{|\Sigma|}
		\mid
			\xi_{i,j} \cdot v > \lambda_{i,j}
		\},
\]
and a monoid homomorphism $\varphi_i \colon \Sigma^* \to \prod_{j=1}^{K_{i,3}} (\mathbb{Z}/\theta_{i,j}\mathbb{Z})$ such that
\[
	\varphi_i(w)
	=
	(
		\zeta_{i,1} \cdot \Phi(w),\,
		\zeta_{i,2} \cdot \Phi(w),\,
		\ldots,\,
		\zeta_{i,K_{i,3}} \cdot \Phi(w)
	);
\]
moreover, we write $R_i \in \Sigma^*$ for the inverse image
\[
	R_i =
	\varphi_i^{-1}(\{ (\eta_{i,1},\eta_{i,2},\ldots,\eta_{i,K_{i,3}}) \}).
\]

Each language $R_i \in \Sigma^*$ is expressed as the inverse image of a subset of a finite monoid.
From \cite[Theorem~1]{rabin1959} we see that each $R_i$ is a regular language, in particular, for each $R_i$ we may construct a finite-state automaton with states given by the set $\prod_{j=1}^{K_{i,3}} (\mathbb{Z}/\theta_{i,j}\mathbb{Z})$, initial state given by $(0,\ldots,0)$, an accepting state of $(\eta_{i,1},\eta_{i,2},\ldots,\eta_{i,K_{i,3}})$, and a transition $v \to^\sigma v'$ for each state $v$ and letter $\sigma \in \Sigma$ where $v' = v + \varphi_i(\sigma)$.
Moreover, since the class of unambiguous context-free grammars is closed under intersection with regular languages, we see that each $L(U \cap R_i,\mathcal{C}_i) = L(U,\mathcal{B}_i)$ is linearly constrained.
Then, from \cref{prop:lcl-is-holonomic}, we see that each $f_i(x_1,x_2,\ldots,x_{|\Sigma|})$ is holonomic.

From \cref{lemma:holonomic-closure-properties}, holonomic functions are closed under addition, and thus the multivariate generating function of $L(U, \mathcal{P})$ is holonomic.
\end{proof}

\section{Patterned Words in Virtually Abelian Groups}\label{sec:patterned-words}

In the remainder of this paper $G$ will denote a virtually abelian group that is generated as a monoid by some finite weighted generating set $S$.
It is well known that $G$ contains a finite-index normal subgroup that is isomorphic to $\mathbb{Z}^n$ for some $n$.
Without loss of generality, we assume that $\mathbb{Z}^n \triangleleft G$ with $d = [G:\mathbb{Z}^n]$.
Fix a set of coset representatives $T = \{t_1=1,t_2,\ldots,t_d\}$ for $\mathbb{Z}^n$ in $G$, then we write elements of $G$ in the normal form $g = z \cdot t$ where $z \in \mathbb{Z}^n$ and $t \in T$.

\begin{definition}\label{defn:phi-rho}
	Let $\psi\colon G \to \mathbb{Z}^n$ and $\rho\colon G \to T$ be the maps defined such that the normal form for $g \in G$ is given by $\psi(g) \cdot \rho(g)$.
\end{definition}

Benson~\cite{benson1983} showed that virtually abelian groups have rational standard growth series by demonstrating that each group element has at least one geodesic representative that can be expressed as a \emph{patterned word}, where the set of such patterned words is then studied using the theory of polyhedral sets.
In this section we modify these arguments to study the set of all geodesic words in $S^*$, in particular, we describe \cref{algo:word-shuffling} which converts words in $S^*$ to \emph{patterned words} which represent the same group element with the same weight.
In \cref{sec:patterned-words/properties} we compute the weight and group element of patterned words, and describe the patterned words which correspond to geodesics.

We begin by defining two finite sets of words $Y,P \subseteq S^*$ as follows.

\begin{definition}\label{defn:sets-Y-P}
From the generating set $S$ and the normal subgroup $\mathbb{Z}^n \triangleleft G$ with finite index $d = [G : \mathbb{Z}^n]$, we define the sets
\begin{align*}
	Y
	&=
		\{
			\sigma \in S^*
		\mid
			1 \leqslant |\sigma|_S \leqslant d
			\ \,\mathrm{and}\,\ 
			\overline{\sigma} \in \mathbb{Z}^n
		\}
	\ \ \text{and}
	\\
	P
	&=
		\{
			\sigma \in S^*
		\mid
			1 \leqslant |\sigma|_S \leqslant d - 1
			\ \,\mathrm{and}\,\ 
			\overline{\sigma} \notin \mathbb{Z}^n
		\},
\end{align*}
and we fix a labelling $\{y_1,y_2,\ldots,y_m\} = Y$ where $m = |Y|$.
\end{definition}

We define the sets $Y$ and $P$ as above so that we have the technical property given in \cref{lemma:factoring-words}.
We will find this property useful in the proof of \cref{lemma:map-delta} which is then used to construct \cref{algo:word-shuffling}.

\begin{lemma}\label{lemma:factoring-words}
	Suppose that $w \in S^*$ with $1 \leqslant |w|_S \leqslant d$ and $w \notin P$.
	Then, there is a factoring $w = \alpha \beta \delta$ with $\alpha \in P \cup \{ \varepsilon \}$, $\beta \in Y$ and $\delta \in S^*$.
	In particular, there is a unique choice of such a factoring for which $(|\alpha|_S, |\beta|_S) \in \mathbb{N}^2$ is minimal with respect to the lexicographic ordering on $\mathbb{N}^2$.
\end{lemma}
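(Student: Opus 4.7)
The plan is a pigeonhole argument on the sequence of cosets traced out by the prefixes of $w$. Write $w = s_1 s_2 \cdots s_k$ with $k = |w|_S \in [1,d]$ and set $g_i = \overline{s_1 \cdots s_i}$ for $i = 0, 1, \ldots, k$, so that $g_0 = 1$. The coset representatives $\rho(g_0), \rho(g_1), \ldots, \rho(g_k)$ all lie in the $d$-element set $T$, with $\rho(g_0) = t_1 = 1$.

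For existence I split into two cases. If some $g_i$ with $i \in [1, k]$ lies in $\mathbb{Z}^n$, take the smallest such $i$; since $i \leqslant k \leqslant d$, the prefix $s_1 \cdots s_i$ lies in $Y$, and the factoring $\alpha = \varepsilon$, $\beta = s_1 \cdots s_i$, $\delta = s_{i+1} \cdots s_k$ meets the required form. Otherwise $g_1, g_2, \ldots, g_k$ all lie outside $\mathbb{Z}^n$; combined with the hypothesis $w \notin P$, this forces $k = d$, since if $k \leqslant d-1$ and $g_k \notin \mathbb{Z}^n$ then $w$ would belong to $P$. In this second sub-case, $\rho(g_1), \ldots, \rho(g_d)$ are $d$ elements of the $(d-1)$-element set $T \setminus \{t_1\}$, so pigeonhole produces indices $1 \leqslant a < b \leqslant d$ with $\rho(g_a) = \rho(g_b)$. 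Normality of $\mathbb{Z}^n \triangleleft G$ then yields $g_a^{-1} g_b \in \mathbb{Z}^n$, and setting $\alpha = s_1 \cdots s_a$, $\beta = s_{a+1} \cdots s_b$ and $\delta = s_{b+1} \cdots s_d$ supplies a factoring with $\alpha \in P$, $\beta \in Y$ and $\delta \in S^*$.

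Uniqueness is automatic once existence is in hand: the pair $(|\alpha|_S, |\beta|_S)$ determines the triple $(\alpha, \beta, \delta)$ by reading off the corresponding prefix, middle segment and suffix of $w$, and the set of admissible pairs is a nonempty subset of the lex-well-ordered $\mathbb{N}^2$, so its lex-minimum is unique. The only step requiring any care is the second case of existence, where the pigeonhole must be performed inside $T \setminus \{t_1\}$ rather than all of $T$; this is precisely what guarantees $a \geqslant 1$ and hence $\alpha \in P$, rather than $\alpha \in Y$, which would not fit the required form $P \cup \{\varepsilon\}$.
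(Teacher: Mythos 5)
Your proof is correct and follows essentially the same route as the paper's: a pigeonhole argument on the cosets of the prefixes of $w$, with uniqueness following from the well-ordering of $\mathbb{N}^2$ under the lexicographic order. The only difference is cosmetic — you split on whether some prefix of $w$ lies in $\mathbb{Z}^n$ (and pigeonhole inside $T\setminus\{t_1\}$ to force $\alpha\in P$), whereas the paper splits on whether $|w|_S<d$ and then extracts a minimal subword $\beta$ with $\overline{\beta}\in\mathbb{Z}^n$ — but the content is the same.
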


\begin{proof}

Let $w = w_1 w_2 \cdots w_k$ with $1 \leqslant k \leqslant d$ and $w \notin P$.

Notice that if we have at least one such factorisation, then there is a unique choice of such a factoring where $(|\alpha|_S, |\beta|_S) \in \mathbb{N}^2$ is minimal with respect to the lexicographic ordering on $\mathbb{N}^2$.
Thus, all that remains to be shown is that at least one such factoring $w = \alpha\beta\delta$ exists.

If $|w|_S < d$, then we have such a factorisation given by $\beta = w$, and $\alpha = \delta = \varepsilon$.
Thus, in the remainder of this proof we consider the case where $|w|_S = d$.

If $|w|_S = d$, then from the pigeonhole principle on the $d$ cosets, we see that there must be a nontrivial factor $b = w_i w_{i+1} \cdots w_j$ for which $\overline{b} \in \mathbb{Z}^n$.
Let $I \geqslant 1$ be the smallest value for which there is a $J \geqslant I$ with $\overline{w_{I} w_{I+1} \cdots w_J} \in \mathbb{Z}^n$, then let $\alpha = w_1 w_2 \cdots w_{I-1}$ and $\beta = w_{I} w_{I+1} \cdots w_J$.
From our choice of indices $I$ and $J$, we see that $\beta \in Y$, and either $\alpha = \varepsilon$ or $\overline{\alpha} \notin \mathbb{Z}^n$.
Moreover, we see that $|\alpha|_S = I-1 \leqslant d - 1$ and thus $\alpha \in P\cup \{\varepsilon\}$.
\end{proof}

Notice that $S \subseteq Y \cup P$, and thus $Y \cup P$ generates the group $G$.
We will see that for each word $\sigma \in S^*$, there is a word $w \in Y^*(PY^*)^k$, with $0 \leqslant k \leqslant d$, such that $w$ represents the same group element as $\sigma$ with the same weight.
We formalise this by defining \emph{patterns} and \emph{patterned words} as follows.

\begin{definition}[Patterned words]\label{defn:patterned-words}
	Let $\pi = \pi_1 \pi_2 \cdots \pi_k \in P^*$ be a word in the letters of $P$ with length $k = |\pi|_P \leqslant d$ for which each proper prefix belongs to a distinct coset, that is,
	\begin{equation}\label{eq:pattern-cosets}
		1 = \rho(\overline{\varepsilon}),\ 
		\rho(\overline{\pi_1}),\ 
		\rho(\overline{\pi_1\pi_2}),\ 
		\ldots,\ 
		\rho(\overline{\pi_1\pi_2\cdots\pi_{k-1}})
	\end{equation}
	are pairwise distinct; and let $v \in \mathbb{N}^{(k+1)m}$ be a vector where $m = |Y|$.
	Then we say that $\pi$ is a \emph{pattern} and that $(v,\pi)$ is a \emph{patterned word}.
	We then write
	\begin{multline*}
	v^\pi
	=
	\Big(
		y_1^{v_1}
		y_2^{v_2}
		\cdots
		y_m^{v_m}
	\Big)
	\pi_1
	\Big(
		y_1^{v_{m+1}}
		y_2^{v_{m+2}}
		\cdots
		y_m^{v_{2m}}
	\Big)
	\pi_2
	\\
	\cdots
	\pi_k
	\Big(
		y_1^{v_{k \cdot m+1}}
		y_2^{v_{k \cdot m+2}}
		\cdots
		y_m^{v_{(k+1)\cdot m}}
	\Big).
	\end{multline*}
	Notice that $\rho(\overline{\pi})$ is not included in (\ref{eq:pattern-cosets}).
	If $\rho(\overline{\pi})$ is also distinct from each coset representative in (\ref{eq:pattern-cosets}), then we say that $\pi$ is a \emph{strong pattern} and that $(v,\pi)$ is a \emph{strongly patterned word}.
\end{definition}

To simplify notation in later sections, we introduce the following sets.

\begin{definition}\label{defn:pattern-set}
	We write $\text{\sc Patt} \subseteq P^*$ for the set of all patterns, and we write $\text{\sc StrPatt} \subseteq \text{\sc Patt}$ for the set of all strong patterns.
	Notice that $\text{\sc Patt}$ and $\text{\sc StrPatt}$ are finite, in particular, $|\text{\sc Patt}| \leqslant |P|^{d+1}$.
\end{definition}

To simplify notation in \cref{sec:patterned-words/word-shuffling-algorithm}, we extend this as follows.

\begin{definition}[Extended Patterned Words]\label{defn:extended-special-form}
	If $(v,\pi)$ is a (strongly) patterned word, and $\sigma \in S^*$, then $((v,\pi),\sigma)$ is an \emph{extended (strongly) patterned word}.
\end{definition}

In \cref{algo:word-shuffling}, for each word $\sigma \in S^*$, we construct a finite sequence of extended patterned words that begins with $((\mathbf{0},\varepsilon),\sigma)$ and ends with an extended patterned word of the form $((v,\pi),\varepsilon)$.
Moreover, this sequence has the property that $v^\pi$ and $\sigma$ represent the same group element with the same weight.
To simplify notation, we define the following equivalence relation.

\begin{definition}\label{defn:equiv-relation}
	We define the equivalence relation $\simeq$ on $S^*$ such that, for each $w,\sigma \in S^*$, we have $w \simeq \sigma$ if and only if both $\overline{w} = \overline{\sigma}$ and $\omega(w) = \omega(\sigma)$.
\end{definition}

Notice that if we have a patterned word $(v,\pi)$ with $v^\pi \simeq \sigma$, then $\sigma$ is a geodesic if and only if the word $v^\pi$ is a geodesic.

\subsection{Word shuffling}\label{sec:patterned-words/word-shuffling-algorithm}

In this subsection, we construct \cref{algo:word-shuffling} which `shuffles' words of the form $\sigma \in S^*$ into patterned words $(v,\pi)$.
In particular, for each word $\sigma$, we compute a finite sequence of extended patterned words
\begin{multline}\label{eq:patterned-sequence}
	((\mathbf{0},\varepsilon),\sigma)
	=
	((u^{(1)},\tau^{(1)}),\sigma^{(1)}),
	((u^{(2)},\tau^{(2)}),\sigma^{(2)}),
	\\\ldots,
	((u^{(q)},\tau^{(q)}),\sigma^{(q)})
	=
	((v,\pi),\varepsilon),
\end{multline}
such that
\[
	(u^{(i)})^{\tau^{(i)}} \sigma^{(i)}
	\simeq
	(u^{(i+1)})^{\tau^{(i+1)}}\sigma^{(i+1)}
	\quad
	\text{and}
	\quad
	|\sigma^{(i)}|_S > |\sigma^{(i+1)}|_S
\]
for each $i$.
Notice that $v^\pi \simeq \sigma$, and $q \leqslant |\sigma|_S+1$ where $q$ is the length of the sequence in (\ref{eq:patterned-sequence}).
From (\ref{eq:patterned-sequence}), we define $\mathrm{Shuffle}(\sigma) = (v,\pi)$ where the patterned word $(v,\pi)$ has the property that $v^\pi \simeq \sigma$.

The idea of \cref{algo:word-shuffling} is to compute each $((u^{(i+1)},\tau^{(i+1)}),\sigma^{(i+1)})$ from its previous extended patterned word $((u^{(i)},\tau^{(i)}),\sigma^{(i)})$ by replacing a bounded-length prefix of $\sigma^{(i)}$ with a strictly shorter word, adding at most a unit vector to $u^{(i)}$, and adding at most one letter to $\tau^{(i)}$.
In order to describe our algorithm, we introduce the following additional notation.

Recall that $d = [G : \mathbb{Z}^n]$ is the index of the $\mathbb{Z}^n$ normal subgroup of $G$.
For each word $\sigma \in S^*$, we fix a bounded-length prefix as follows.

\begin{definition}[Prefixes]\label{rmk:short-prefix}
	We write $\mathrm{Prefix} \colon S^* \to S^*$ for the function which computes the prefix of a word of length at most $d$, that is, $\mathrm{Prefix}(\sigma) = \sigma_1 \sigma_2 \cdots \sigma_q$ where $q = \min(d,|\sigma|_S)$.
	Notice that if $w = \mathrm{Prefix}(\sigma)$ with $|w|_S < d$, then $\sigma = w$.
\end{definition}

In sequence (\ref{eq:patterned-sequence}), each word $\sigma^{(i+1)}$ is obtained from $\sigma^{(i)}$ by replacing the prefix $w^{(i)} = \mathrm{Prefix}(\sigma^{(i)})$ with a strictly shorter word $w^{(i)\prime}$.
We write these prefix replacements using the following notation.

\begin{definition}[Prefix Replacements]
Let $\sigma \in S^*$ be a word which factors as $\sigma = w\zeta$ where $w,\zeta \in S^*$, then for each word $w' \in S^*$ we write $(w \mapsto w') \cdot \sigma = w'\zeta$ which we call a \emph{prefix replacement}.
We write a sequence of replacements as
\[
	(w_n \mapsto w'_n)
	\cdots
	(w_2 \mapsto w'_2)
	(w_1 \mapsto w'_1)
	\cdot \sigma
\]
where replacements are composed right-to-left.
Notice that if $\sigma' = (w \mapsto w')\cdot \sigma$, then $\omega(\sigma') = \omega(\sigma) - \omega(w) + \omega(w')$ where $\omega \colon S^* \to \mathbb{N}$ is the weight function.
\end{definition}

To understand how prefix replacements are composed, consider the following.

\begin{example}\label{ex:prefix-replacement-sequence}
We have the sequence of replacements
\begin{equation}\label{eq:prefix-replacement-example}
	(c \mapsto dc)
	(ba \mapsto cb)
	(\varepsilon \mapsto b)
	\cdot
	az
	=
	dcbz.
\end{equation}
Notice that if $(w \mapsto w')\cdot \sigma$ is defined, then we have $\sigma = (w' \mapsto w)(w \mapsto w')\cdot \sigma$, that is, each prefix replacement has an inverse.
For example, from the sequence of prefix replacements given in (\ref{eq:prefix-replacement-example}), we see that
\[
	az =
	(b \mapsto \varepsilon)
	(cb \mapsto ba)
	(dc \mapsto c)
	\cdot
	dcbz.
\]
Thus, we may compute the inverse of a sequence of prefix replacements.
\end{example}

For each pattern $\pi$, we write $\mathcal{N}_\pi$ for the set of all vectors $v$ for which $(v,\pi)$ is a patterned word, as defined in \cref{defn:patterned-words}.
We introduce the following notation to simplify the description of our algorithm.

\begin{definition}\label{defn:standard-basis-elements}
For each pattern $\pi = \pi_1 \pi_2 \cdots \pi_k \in P^*$, we write $\mathcal{Z}_\pi$ and $\mathcal{N}_\pi$ for the sets $\mathbb{Z}^{(k+1)m}$ and $\mathbb{N}^{(k+1)m}$, respectively, where $m = |Y|$.
Moreover, for each $i \in \{ 1,2,\ldots,\dim(\mathcal{Z}_\pi) \}$ we write $e_{\pi,i}$ for the $i$-th standard basis element of $\mathcal{Z}_\pi$ and $e_{\pi,\varnothing} = \mathbf{0} \in \mathcal{Z}_\pi$ for the zero vector of $\mathcal{Z}_\pi$.
\end{definition}

When computing (\ref{eq:patterned-sequence}), it may be the case that $|\tau^{(i)}|_P \neq |\tau^{(i+1)}|_P$ and thus the vectors $u^{(i)}$ and $u^{(i+1)}$ lie in different spaces $\mathcal{N}_{\tau^{(i)}}$ and $\mathcal{N}_{\tau^{(i+1)}}$, respectively.
We define the following map to convert between these spaces.

\begin{definition}\label{defn:projection}
For each pair of patterns $\pi,\tau \in P^*$, let $t = \dim(\mathcal{Z}_\tau)$ and $p = \dim(\mathcal{Z}_\pi)$, then we define the map $\mathrm{Proj}_{\pi,\tau}\colon \mathcal{Z}_\pi \to \mathcal{Z}_\tau$ such that
\[
	\mathrm{Proj}_{\pi,\tau}(u_1,u_2,\ldots,u_p)
	=
	(u_1,u_2,\ldots,u_p,0,0,\ldots,0)
\]
if $t > p$, and
\[
	\mathrm{Proj}_{\pi,\tau}(u_1,u_2,\ldots,u_p)
	=
	(u_1,u_2,\ldots,u_t)
\]
otherwise.
Notice that if $\dim(\mathcal{Z}_\tau) < \dim(\mathcal{Z}_\pi)$, then $\mathrm{Proj}_{\pi,\tau}$ is a projection; otherwise, $\dim(\mathcal{Z}_\tau) \geqslant \dim(\mathcal{Z}_\pi)$ and $\mathrm{Proj}_{\pi,\tau}$ is an embedding.
\end{definition}

In order to construct \cref{algo:word-shuffling}, we need to define a map which explicitly describes how to construct the sequence of extended patterned words in~(\ref{eq:patterned-sequence}).
We construct such a map in the following lemma.

\begin{lemma}\label{lemma:map-delta}
	We may construct a map
	\[
		\Delta \colon
		\text{\sc StrPatt} \times W_1
		\to
		(\mathbb{N}_+ \cup \varnothing) \times \text{\sc Patt} \times W_2,
	\]
	where
	\[
		W_1 = \{w \in S^* \mid 1 \leqslant |w|_S \leqslant d\}
		\quad\text{and}\quad
		W_2 = \{w \in S^* \mid |w|_S < d\}
	\]
	with the following properties.
	Let $((u,\tau),\sigma)$ be an extended strongly patterned word, and let $\Delta(\tau,w) = (x,\tau',w')$ with $w = \mathrm{Prefix}(\sigma)$.
	We may then \emph{apply} $\Delta$ to obtain an extended patterned word $((u',\tau'),\sigma')$ where $u' = \mathrm{Proj}_{\tau,\tau'}(u)+e_{\tau',x}$ and $\sigma' = (w\mapsto w') \cdot \sigma$.
	This will be denoted as
	\[
		((u,\tau),\sigma)
		\xrightarrow{\Delta}
		((u',\tau'),\sigma').
	\]
	For each extended strongly patterned word $((u,\tau),\sigma)$,
	\begin{enumerate}
		\item\label{lemma:map-delta:prop1}
			$|\tau|_P \leqslant |\tau'|_P$ and thus $\mathrm{Proj}_{\tau,\tau'}\colon\mathcal{N}_\tau \to \mathcal{N}_{\tau'}$ is an embedding;
		\item\label{lemma:map-delta:prop2}
			$u^\tau \sigma \simeq (u')^{\tau'}\sigma'$;
		\item\label{lemma:map-delta:prop3}
			$|\sigma|_S > |\sigma'|_S$ and $\omega(w) > \omega(w')$; and
		\item\label{lemma:map-delta:prop4}
			either $|\sigma'|_S = 0$, or $((u',\tau'),\sigma')$ is an extended strongly patterned word.
	\end{enumerate}
	Notice that property~\ref{lemma:map-delta:prop4} implies that either $((u',\tau'),\sigma')$ is equivalent to the patterned word $(u',\tau')$, or we may apply $\Delta$ again.
	From property~\ref{lemma:map-delta:prop3}, we see that after finitely many applications of the map $\Delta$, we have a patterned word.
\end{lemma}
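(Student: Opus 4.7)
The plan is to define $\Delta(\tau, w)$ by a case split on $w$ and verify properties (1)--(4) in each case.

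If $w \in P$, then $|w|_S \leqslant d-1$ forces $\sigma = w$, so I set $\Delta(\tau, w) = (\varnothing, \tau w, \varepsilon)$. The resulting $\tau w$ is a valid pattern because its proper-prefix cosets $1, \rho(\overline{\tau_1}), \ldots, \rho(\overline{\tau})$ are pairwise distinct by the strongness of $\tau$, and $|\tau w|_P = |\tau|_P + 1 \leqslant d$ since strong patterns satisfy $|\tau|_P \leqslant d-1$ (otherwise one would need $d+1$ distinct cosets in a set of $d$). Here $\sigma' = \varepsilon$ makes Property~(4) vacuous, while (1)--(3) follow at once.

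If $w \notin P$, apply \cref{lemma:factoring-words} to obtain the lex-minimal factoring $w = \alpha\beta\delta$ with $\alpha \in P \cup \{\varepsilon\}$, $\beta = y_i \in Y$, and $\delta \in S^*$. The natural candidate $\Delta(\tau, w) = (x_\beta, \tau\alpha, \delta)$, where $x_\beta$ is the coordinate of $y_i$ in the last $Y$-block of $\tau\alpha$, makes $(u')^{\tau'}w' = u^\tau \alpha\, y_i\, \delta$ agree letter-by-letter with $u^\tau w$, giving Property~(2) in the stronger form of literal equality; (1) and (3) follow from $|\beta|_S \geqslant 1$. If $\alpha = \varepsilon$ then $\tau' = \tau$ remains strong; if $\alpha \in P$, then the proper-prefix cosets of $\tau\alpha$ are still pairwise distinct by strongness of $\tau$, so $\tau\alpha$ is at least a pattern, and is strong provided $\rho(\overline{\tau\alpha})$ does not coincide with any of them.

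The main obstacle is the remaining sub-case: $\alpha \in P$ with $\tau\alpha$ a pattern but not strong, and $\sigma' \neq \varepsilon$. In this situation $\rho(\overline{\tau\alpha}) = \rho(\overline{\tau_1\cdots\tau_j})$ for some $0 \leqslant j < k = |\tau|_P$ (the index $j = k$ is impossible since $\alpha \notin \mathbb{Z}^n$), so $\overline{\tau_{j+1}\cdots\tau_k\,\alpha} \in \mathbb{Z}^n$. I would exploit this abelian block together with the normality of $\mathbb{Z}^n$ in $G$ to redefine $\Delta(\tau, w)$ in this sub-case so that $\tau' = \tau$ is left unchanged (and hence remains strong), $u$ is incremented at a coordinate of one of the existing blocks $Y_0, \ldots, Y_k$, and $w'$ is a strictly shorter word that absorbs the discrepancy. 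Using the identity $\overline{\tau_{j+1}\cdots\tau_k\,\alpha} \in \mathbb{Z}^n$, a $\mathbb{Z}^n$-element can be commuted past the trailing coset letters of $\tau$ and absorbed into an existing $Y$-block, freeing a weight budget that $w'$ then reclaims. The delicate point is choosing $(x, w')$ so that $(u')^\tau w' \simeq u^\tau w$ with both the group element and the weight preserved; this is where the technical work lies, and once it is done the remaining verifications of (1), (3), and (4) are routine.
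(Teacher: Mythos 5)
Your case analysis and the mechanism you identify are exactly those of the paper's proof, and everything you actually carry out is correct: the $w \in P$ case, the $\alpha = \varepsilon$ case, and the case where $\tau\alpha$ is a strong pattern all match the paper (which organises the split slightly differently, by asking whether $\rho(\overline{\tau\alpha})$ equals $\rho(\overline{\tau_1\cdots\tau_a})$ for some $0 \leqslant a \leqslant k$, thereby folding your $\alpha=\varepsilon$ case in as $a=k$). One small over-claim: when $\alpha = \varepsilon$ the words $(u')^{\tau'}w'\zeta$ and $u^\tau\sigma$ agree only up to commuting $y_i$ into the last $Y$-block, i.e.\ up to $\simeq$, not ``letter-by-letter''; literal equality holds only when a fresh block is created.

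The genuine shortfall is that you flag the remaining sub-case ($\alpha \in P$ with $\tau\alpha$ a pattern but not strong) as ``where the technical work lies'' and stop, and this is precisely the sub-case the lemma exists to handle. The completion is short and is exactly what your sketch points at: take the unique $j$ with $\rho(\overline{\tau_1\cdots\tau_j}) = \rho(\overline{\tau\alpha})$ and set $\Delta(\tau,w) = (jm+b,\ \tau,\ \alpha\delta)$, where $\beta = y_b$. Since the prefix of $u^\tau\alpha$ ending at block $j$ lies in the same coset as the whole of $\overline{u^\tau\alpha}$, normality of $\mathbb{Z}^n$ forces the intervening factor $\overline{\tau_{j+1}(\cdots)\cdots\tau_k(\cdots)\alpha}$ to lie in $\mathbb{Z}^n$, so it commutes with $\overline{y_b}$; hence moving $y_b$ from $w$ into block $j$ preserves the group element, and trivially the weight, giving $(u+e_{\tau,jm+b})^\tau(\alpha\delta)\zeta \simeq u^\tau w\zeta$. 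Properties (1), (3) and (4) are then immediate: $\tau'=\tau$ is still strong, $|w'|_S = |w|_S - |y_b|_S < d$, and $\omega(w') = \omega(w) - \omega(y_b)$. In short, your proposal is the paper's proof with its central computation left as an acknowledged placeholder; with the paragraph above inserted it is complete.
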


\begin{proof}

Let $\tau = \tau_1 \tau_2 \cdots \tau_k \in P^*$ be a strong pattern, that is, $\tau$ is a pattern for which the coset representatives
\[
	\rho(\overline{\varepsilon}),\,
	\rho(\overline{\tau_1}),\,
	\rho(\overline{\tau_1 \tau_2}),\,
	\rho(\overline{\tau_1 \tau_2 \tau_3}),\,
	\ldots,\,
	\rho(\overline{\tau})
\]
are pairwise distinct.
Then, from the pigeonhole principle on the $d$ cosets of $\mathbb{Z}^n$ in $G$, we see that $|\tau|_P = k < d$.

Let $w \in S^*$ be a word with length $1 \leqslant |w|_S \leqslant d$.
We separate the remainder of this proof into the cases where $w \in P$ and $w \notin P$ as follows.

Suppose that $w \in P$, then we have a length $k+1$ pattern $\tau' = \tau w$,
moreover, from the definition of words in $P$, we see that $|w|_S < d$, and from \cref{rmk:short-prefix} we have $w = \sigma$.
We then define $\Delta(\tau,w) = (\varnothing, \tau', \varepsilon)$.
For each extended strongly patterned word $((u,\tau),w)$ with $u \in \mathbb{N}^p$, we then obtain an extended patterned word $((u',\tau'),\varepsilon)$ where $u' = \mathrm{Proj}_{\tau,\tau'}(u) = (u_1,u_2,\ldots,u_p,0,0,\ldots,0)$.
Notice that we have $(u')^{\tau'} = u^\tau w$.
This completes our proof for the case that $w \in P$.

In the remainder of this proof, we suppose that $w \notin P$.
From \cref{lemma:factoring-words}, we factor $w$ uniquely as $w = \alpha\beta\delta$ where $\alpha \in P \cup \{\varepsilon\}$, $\beta \in Y$ and $(|\alpha|_S,|\beta|_S)$ is minimal with respect to the lexicographic order on $\mathbb{N}^2$.
From the labelling $Y = \{y_1,y_2,\ldots,y_m\}$, we see that there must be an index $b$ such that $\beta = y_b$.

Let $((u,\tau),\sigma)$ be an extended strongly patterned word with $w = \mathrm{Prefix}(\sigma)$ and $u = (u_0, u_1, \ldots,u_k)$ where each $u_{a} = (u_{a,1}, u_{a,2},\ldots,u_{a,m}) \in \mathbb{N}^m$. 
Then if we factor $\sigma$ as $\sigma = w\zeta$, we see that
\begin{multline*}
	u^\tau \sigma
	=
	\Big(
		y_1^{u_{0,1}}
		y_2^{u_{0,2}}
		\cdots
		y_m^{u_{0,m}}
	\Big)
	\tau_1
	\Big(
		y_1^{u_{1,1}}
		y_2^{u_{1,2}}
		\cdots
		y_m^{u_{1,m}}
	\Big)
	\tau_2
	\\
	\cdots
	\tau_k
	\Big(
		y_1^{u_{k,1}}
		y_2^{u_{k,2}}
		\cdots
		y_m^{u_{k,m}}
	\Big)
	\alpha y_b \delta \zeta.
\end{multline*}
If there is an index $a$ with $0 \leqslant a \leqslant k$ such that $\rho(\overline{\tau_1 \tau_2 \cdots \tau_a}) = \rho(\overline{\pi \alpha})$, then the choice of such an index $a$ must be unique, and we see that
\[
	\overline{
		\tau_{a+1}
		\Big(
			y_1^{u_{a+1,1}}
			y_2^{u_{a+1,2}}
			\cdots
			y_m^{u_{a+1,m}}
		\Big)
		\tau_{a+2}
		\cdots
		\tau_k
		\Big(
			y_1^{u_{k,1}}
			y_2^{u_{k,2}}
			\cdots
			y_m^{u_{k,m}}
		\Big)
		\alpha
	} \in \mathbb{Z}^n
\]
commutes with $\overline{y_b} \in \mathbb{Z}^n$, that is,
\[
	{(u_0,\ldots,u_{a-1},u_a+e_b,u_{a+1},\ldots,u_k)}^\tau
	\alpha \delta \zeta
	\simeq
	u^\tau \alpha y_b \delta \zeta
	=
	u^\tau \sigma
\]
where $e_b \in \mathbb{N}^m$ is the $b$-th standard basis element.
In this case we define the map $\Delta(\tau,w) = (a\cdot m+b, \, \tau,\, \alpha\delta)$ and our proof is complete.
Otherwise, we see that the coset representatives
\[
	\rho(\overline{\varepsilon}),\,
	\rho(\overline{\tau_1}),\,
	\rho(\overline{\tau_1\tau_2}),\,
	\rho(\overline{\tau_1\tau_2\tau_3}),\,
	\ldots,\,
	\rho(\overline{\tau}),\,
	\rho(\overline{\tau\alpha})
\]
are pairwise distinct and $\alpha \neq \varepsilon$, that is, $\alpha \in P$.
Then we see that the length $k+1$ word $\tau' = \tau \alpha \in P^*$ is a strong pattern, and that we have
\[
	(u_0,u_2,\ldots,u_k,e_b)^{\tau'} \delta \zeta = u^\tau \alpha y_b \delta\zeta = u^\tau\sigma
\]
where $e_b \in \mathbb{N}^m$ is the $b$-th standard basis vector.
Then after defining the map $\Delta(\tau,w) = (a\cdot k+b, \tau', \delta)$ our proof is complete.
\end{proof}

We are now ready to define our algorithm as follows.

\begin{algorithm}[Word Shuffling]\label{algo:word-shuffling}
	Let $\Delta$ be the map in \cref{lemma:map-delta}.
	For each word $\sigma \in S^*$, there is a finite sequence of extended patterned words
	\begin{multline}\label{algo:word-shuffling/sequence}
		((\mathbf{0},\varepsilon),\sigma)
		=
		((u^{(1)},\tau^{(1)}),\sigma^{(1)})
		\xrightarrow{\Delta}
		((u^{(2)},\tau^{(2)}),\sigma^{(2)})
		\\\cdots
		\xrightarrow{\Delta}
		((u^{(q)},\tau^{(q)}),\sigma^{(q)})
		=
		((v,\pi),\varepsilon).
	\end{multline}
	From this sequence we define $\mathrm{Shuffle}(\sigma) = (v,\pi)$.
	Notice from property~\ref{lemma:map-delta:prop3} in \cref{lemma:map-delta} that we have
	\[
		|\sigma|_S = |\sigma^{(1)}|_S >
		|\sigma^{(2)}|_S >
		|\sigma^{(3)}|_S >
		\cdots >
		|\sigma^{(q)}|_S
		= 0
	\]
	and thus $q \leqslant |\sigma|_S + 1$.
From property~\ref{lemma:map-delta:prop2} in \cref{lemma:map-delta}, we see that $v^\pi \simeq \sigma$.
\end{algorithm}

In the remainder of this section, we compute the group elements and weights of patterned words, and determine which patterned word represents geodesics.

\subsection{Geodesic patterned words}\label{sec:patterned-words/properties}

From \cref{algo:word-shuffling}, for each word $\sigma \in S^*$ we have a well-defined patterned word $(v,\pi) = \mathrm{Shuffle}(\sigma)$ such that $v^\pi \simeq \sigma$, that is, $v^\pi$ represents the same group element as $\sigma$ with the same weight.
In particular, we see that $\sigma$ is a geodesic if and only if $v^\pi$ is a geodesic.

In this section, we modify an argument of Benson~\cite{benson1983} and show that the group element and weight of any word $v^\pi$ can be computed with the use of integer affine transforms, and that we may verify that $v^\pi$ is a geodesic by checking if the vector $v$ belongs to a polyhedral set $\mathcal{G}_\pi$.

\begin{lemma}\label{lemma:patterned-word-maps}
	For each pattern $\pi$, there are integer affine transformations $\Psi_{\pi} \colon \mathcal{Z}_\pi \to \mathbb{Z}^n$ and $\Omega_{\pi} \colon \mathcal{Z}_\pi \to \mathbb{Z}$ such that for each patterned word $(v,\pi)$, we have $\overline{v^\pi} = \Psi_\pi(v) \cdot \rho(\overline{\pi})$ and $\omega(v^\pi) = \Omega_\pi(v)$.
\end{lemma}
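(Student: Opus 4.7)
The plan is to write out $\overline{v^\pi}$ and $\omega(v^\pi)$ explicitly and verify each is an integer affine function of $v$, using the normality of $\mathbb{Z}^n$ in $G$ to handle the group-element computation.

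First I would handle the weight, which is the easy part. Since the word length weight $\omega \colon S^* \to \mathbb{N}$ is additive under concatenation, expanding the display in \cref{defn:patterned-words} gives
\[
  \omega(v^\pi)
  = \sum_{j=1}^{k} \omega(\pi_j)
    + \sum_{i=0}^{k}\sum_{j=1}^{m} v_{i m + j}\,\omega(y_j),
\]
which is an integer affine function of $v \in \mathcal{Z}_\pi$; this defines $\Omega_\pi$.

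For $\Psi_\pi$, the key observation is that each $\overline{y_j}$ lies in the abelian normal subgroup $\mathbb{Z}^n \triangleleft G$, and conjugation by any fixed element of $G$ restricts to a $\mathbb{Z}$-linear automorphism of $\mathbb{Z}^n$. Using additive notation inside $\mathbb{Z}^n$, define $u_i(v) = \sum_{j=1}^{m} v_{im+j}\,\overline{y_j} \in \mathbb{Z}^n$ and the partial products $h_i = \overline{\pi_1 \pi_2 \cdots \pi_i} \in G$ (with $h_0 = 1$). Then
\[
  \overline{v^\pi}
  = u_0(v)\, h_1\, \bigl(h_1^{-1} u_1(v) h_1\bigr)\cdots h_k h_k^{-1} u_k(v)
  = \left(\sum_{i=0}^{k} h_i\, u_i(v)\, h_i^{-1}\right) \cdot h_k,
\]
which I would establish formally by induction on $k$, at each step using $h_{i-1}\, u_i\, h_{i-1}^{-1} \in \mathbb{Z}^n$ so that the factors accumulated on the left commute. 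Finally, since $h_k = \overline{\pi} = \psi(\overline{\pi}) \cdot \rho(\overline{\pi})$ by \cref{defn:phi-rho}, I can absorb $\psi(\overline{\pi}) \in \mathbb{Z}^n$ into the abelian sum and obtain
\[
  \overline{v^\pi}
  = \left(\psi(\overline{\pi}) + \sum_{i=0}^{k} h_i\, u_i(v)\, h_i^{-1}\right) \cdot \rho(\overline{\pi}).
\]
Each map $v \mapsto h_i\, u_i(v)\, h_i^{-1}$ is $\mathbb{Z}$-linear (composition of the linear map $v \mapsto u_i(v)$ with a conjugation automorphism of $\mathbb{Z}^n$), so the bracketed expression is an integer affine function of $v$; this defines $\Psi_\pi$, completing the proof.

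The only mildly subtle step is the shuffle identity for $\overline{v^\pi}$, which is where noncommutativity and the normal subgroup interact; once that is set up, everything else is a bookkeeping argument. Nothing in the proof depends on $(v,\pi)$ being a pattern rather than an arbitrary element of $P^* \times \mathcal{Z}_\pi$, so the constructed $\Psi_\pi, \Omega_\pi$ are genuinely global integer affine transforms of $\mathcal{Z}_\pi$.
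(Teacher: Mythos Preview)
Your proposal is correct and follows essentially the same route as the paper: both compute $\Omega_\pi$ directly from additivity of weight, and both obtain $\Psi_\pi$ by conjugating each $y$-block $u_i(v)$ by the prefix $h_i=\overline{\pi_1\cdots\pi_i}$ to push everything into $\mathbb{Z}^n$, then add $\psi(\overline{\pi})$. The displayed intermediate identity $u_0(v)\,h_1\,(h_1^{-1}u_1(v)h_1)\cdots$ is garbled (the conjugation goes the wrong way and the telescoping is off by one index), but your stated final formula $\overline{v^\pi}=\bigl(\psi(\overline{\pi})+\sum_i h_i\,u_i(v)\,h_i^{-1}\bigr)\cdot\rho(\overline{\pi})$ is exactly right, and the promised induction would straighten out the bookkeeping.
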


\begin{proof}

Recall that in \cref{defn:sets-Y-P} we fixed a labelling $Y = \{y_1, y_2, \ldots, y_m\}$ where $m  = |Y|$.
Define the matrix $Z \in \mathbb{Z}^{m \times n}$ such that $e_i Z = \overline{y_i}$ for each standard basis vector $e_i \in \mathbb{Z}^m$.
Then, we see that $vZ = \overline{y_1^{v_1}y_2^{v_2} \cdots y_m^{v_m}}$ for each $v \in \mathbb{N}^m$.
For each $p \in P$ we see that $\overline{p} x \overline{p}^{-1} \in \mathbb{Z}^n$ for each $x \in \mathbb{Z}^n \triangleleft G$; thus we define matrices $R_p \in \mathbb{Z}^{n \times n}$ such that $x R_p = \overline{p} x \overline{p}^{-1}$ for each $x \in \mathbb{Z}^n$.

To compute the element $\overline{v^\pi}$ we first rewrite $v^\pi$ as
\begin{multline*}
	\Big(
		y_1^{v_{1}}
		y_2^{v_{2}}
		\cdots
		y_m^{v_{m}}
	\Big)
	\cdot
	\pi_1
	\Big(
		y_1^{v_{m+1}}
		y_2^{v_{m+2}}
		\cdots
		y_m^{v_{2m}}
	\Big)
	\pi_1^{-1}
	\\
	(\pi_1 \pi_2)
	\Big(
		y_1^{v_{2m+1}}
		y_2^{v_{2m+2}}
		\cdots
		y_m^{v_{3m}}
	\Big)
	(\pi_1 \pi_2)^{-1}
	\\
	\cdots
	\pi
	\Big(
		y_1^{v_{km+1}}
		y_2^{v_{km+2}}
		\cdots
		y_m^{v_{(k+1)m}}
	\Big)
	\pi^{-1}
	\cdot
	\pi.
\end{multline*}
Then we see that $\rho(\overline{v^\pi}) = \rho(\overline{\pi})$ and $\psi(\overline{v^\pi}) = \Psi_{\pi}(v)$ where
\begin{multline*}
	\Psi_{\pi}(v)
	=
	(v_1,v_2,\ldots,v_m) Z +
	(v_{m+1},v_{m+2},\ldots,v_{2m}) Z R_{\pi_1} +\\
	\cdots +
	(v_{mk+1},v_{mk+2},\ldots,v_{(k+1)m}) Z R_{\pi_k} \cdots R_{\pi_2} R_{\pi_1}
	+ \psi(\pi).
\end{multline*}
Considering the word $v^\pi$ we see that $\omega(v^\pi) = \Omega_{\pi}(v)$ where
\[
	\Omega_{\pi}(v)
	=
	\omega(\pi) +
	\sum_{j=0}^k \sum_{i=1}^m
		v_{jm+i} \cdot \omega(y_i).
\]
The maps $\Psi_{\pi} \colon \mathcal{Z}_\pi \to \mathbb{Z}^n$ and $\Omega_{\pi} \colon \mathcal{Z}_\pi \to \mathbb{Z}$ are integer affine transforms.
\end{proof}

From the integer affine transformations defined in \cref{lemma:patterned-word-maps} and the closure properties of polyhedral sets we have the following result.

\begin{lemma}\label{lemma:geodesics-in-special-form}
	For each pattern $\pi$, there is a polyhedral set $\mathcal{G}_{\pi} \subseteq \mathcal{N}_\pi$ such that $v \in \mathcal{G}_{\pi}$ if and only if $(v,\pi)$ is a patterned word where $v^\pi$ is a geodesic.
\end{lemma}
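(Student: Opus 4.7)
The plan is to express the condition ``$v^\pi$ is a geodesic'' as a Boolean combination of integer affine equations and inequalities, obtain a polyhedral description of the ``non-geodesic'' vectors in $\mathcal{N}_\pi$, and then take the complement. The crucial preliminary observation is that by \cref{algo:word-shuffling}, every element of $G$ admits a geodesic representative of the form $u^\tau$ for some patterned word $(u,\tau)$: given any geodesic $\sigma \in S^*$ for $g \in G$, applying the shuffle algorithm produces a patterned word $(u,\tau) = \mathrm{Shuffle}(\sigma)$ with $u^\tau \simeq \sigma$, so $u^\tau$ is also a geodesic for $g$. Consequently, $v^\pi$ is a geodesic if and only if $\Omega_\pi(v) \leqslant \Omega_\tau(u)$ for every patterned word $(u,\tau)$ with $\overline{u^\tau} = \overline{v^\pi}$.

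By \cref{lemma:patterned-word-maps}, the equality $\overline{u^\tau} = \overline{v^\pi}$ translates via the normal-form decomposition $G = \mathbb{Z}^n \cdot T$ into the pair of conditions $\rho(\overline{\tau}) = \rho(\overline{\pi})$ and $\Psi_\tau(u) = \Psi_\pi(v)$. The first condition depends only on the patterns, and there are only finitely many patterns to consider since $|\text{\sc Patt}| \leqslant |P|^{d+1}$. For each pattern $\tau \in \text{\sc Patt}$ with $\rho(\overline{\tau}) = \rho(\overline{\pi})$, I will form the ``strict witness'' set
\[
    W_\tau = \bigl\{ (v,u) \in \mathcal{N}_\pi \times \mathcal{N}_\tau : \Psi_\pi(v) - \Psi_\tau(u) = \mathbf{0} \text{ and } \Omega_\pi(v) - \Omega_\tau(u) > 0 \bigr\}.
\]
The product $\mathcal{N}_\pi \times \mathcal{N}_\tau$ is polyhedral by \cref{prop:closure-properties-of-polyhedral-sets}, and each of the two additional constraints is a preimage of an elementary region under an integer affine transform built out of $\Psi_\pi,\Psi_\tau,\Omega_\pi,\Omega_\tau$, so $W_\tau$ is polyhedral by \cref{prop:affine-transforms-of-polyhedral-sets}. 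The image of $W_\tau$ under the coordinate projection $\mathcal{Z}_\pi \times \mathcal{Z}_\tau \to \mathcal{Z}_\pi$, which is itself an integer affine transform, is therefore also polyhedral.

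Finally, a vector $v \in \mathcal{N}_\pi$ fails to yield a geodesic $v^\pi$ precisely when it lies in the image of some $W_\tau$ under the projection to the first factor. Since this is a finite union of polyhedral sets, the ``non-geodesic'' subset of $\mathcal{N}_\pi$ is polyhedral, and hence
\[
    \mathcal{G}_\pi \;=\; \mathcal{N}_\pi \setminus \bigcup_\tau \mathrm{proj}_1(W_\tau)
\]
is polyhedral by the closure of polyhedral sets under set difference in \cref{prop:closure-properties-of-polyhedral-sets}. The step that carries the real content is the opening reduction to patterned witnesses: without \cref{algo:word-shuffling}, the infimum of weights over arbitrary $S^*$-representatives of $\overline{v^\pi}$ need not be attained by any $u^\tau$, and the universal quantifier ``over all patterned words $(u,\tau)$'' would be insufficient. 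Once that reduction is in place, the rest is a routine assembly of the closure properties for polyhedral sets and integer affine transforms.
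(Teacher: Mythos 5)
Your proposal is correct and follows essentially the same route as the paper: reduce geodesity of $v^\pi$ to the nonexistence of a patterned witness $(u,\tau)$ representing the same element with strictly smaller weight, encode that witness condition as a polyhedral subset of $\mathcal{N}_\pi\times\mathcal{N}_\tau$ using $\Psi$ and $\Omega$ from \cref{lemma:patterned-word-maps}, project onto the first factor, take the finite union over patterns $\tau$ with $\rho(\overline{\tau})=\rho(\overline{\pi})$, and complement within $\mathcal{N}_\pi$. The only difference is notational: the paper routes the comparison through a fixed polyhedral relation $\mathcal{R}\subseteq\mathbb{Z}^{2(n+1)}$ via the maps $E_\pi=(\Psi_\pi,\Omega_\pi)$ rather than writing the witness set directly.
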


\begin{proof}
From \cref{algo:word-shuffling} we see that the word $v^\pi$ is a geodesic if and only if there is no patterned word $(u,\tau)$ with $\overline{u^\tau} = \overline{v^\pi}$ and $\omega(u^\tau) < \omega(v^\pi)$.
For each pattern $\pi$, let $E_{\pi} \colon \mathcal{Z}_\pi \to \mathbb{Z}^{n+1}$ be the integer affine transformation defined as $E_{\pi} (v) = (\Psi_{\pi}(v),\Omega_{\pi}(v))$,
and let $\mathcal{R} \subseteq \mathbb{Z}^{2(n+1)}$ be the polyhedral set
\[
	\mathcal{R}
	=
	\left\{
		(\nu,\mu) \in \mathbb{Z}^{n+1} \times \mathbb{Z}^{n+1}
	\,\middle\vert\,
	\begin{aligned}
		\nu_1 = \mu_1,\,
		\nu_2 = \mu_2,\,
		\ldots,\,
		\nu_n = \mu_n\\
		\text{and }
		\nu_{n+1} > \mu_{n+1}
	\end{aligned}
	\right\}.
\]
Then, we see that $v^\pi$ is geodesic if and only if there is no patterned word $(u,\tau)$ with $\rho(\overline{\tau}) = \rho(\overline{\pi})$ and
$
	\big(E_{\pi}(v),E_{\tau}(u)\big)
	\in
	\mathcal{R}
$; or equivalently, $v^\pi$ is a geodesic if and only if the intersection
\[
	\Big(
		E_{\pi}(\{v\})
		\times
		E_{\tau}\!
			\left(\mathcal{N}_{\tau}\right)
	\Big)
	\cap
	\mathcal{R}
\]
is empty for each pattern $\tau$ with $\rho(\overline{\tau})=\rho(\overline{\pi})$.

Let $f \colon \mathbb{Z}^{n+1}\times \mathbb{Z}^{n+1} \to \mathbb{Z}^{n+1}$ be the projection onto the first $\mathbb{Z}^{n+1}$ factor, that is, $f(\nu,\mu)=\nu$ for each $(\nu,\mu) \in \mathbb{Z}^{n+1} \times \mathbb{Z}^{n+1}$.
Let
\[
	\mathcal{D}_{\pi,\tau}
	=
	\mathcal{N}_{\pi}
	\cap
	\left[
		\left(E_{\pi}\right)^{-1}
		f\left(
		\Big(
			E_{\pi}\!
			\left(\mathcal{N}_{\pi}\right)
			\times
			E_{\tau}\!
			\left(\mathcal{N}_{\tau}\right)
			\Big)
			\cap
			\mathcal{R}
		\right)
	\right]
.\]
Then, we see that $v^\pi$ is a geodesic if and only if $v \notin \mathcal{D}_{\pi,\tau}$ for each pattern $\tau$ with $\rho(\overline{\tau})=\rho(\overline{\pi})$.
Then, $v^\pi$ is a geodesic if and only if $v \in \mathcal{G}_{\pi}$ where
\[
	\mathcal{G}_{\pi}
	=
	\mathcal{N}_{\pi}
	\setminus
	\bigcup
	\Big\{
		\mathcal{D}_{\pi,\tau}
	\ \Big\vert\ 
		\tau
		\text{ is a pattern with }
		\rho(\overline{\tau})=\rho(\overline{\pi})
	\Big\}
\]
where we see that the above union is finite as there can be only finitely many patterns.
Moreover, each set $\mathcal{G}_{\pi} \subseteq \mathcal{N}_\pi$ is polyhedral from the closure properties in \cref{prop:affine-transforms-of-polyhedral-sets,prop:closure-properties-of-polyhedral-sets}.
\end{proof}

\section{Geodesic Growth}\label{sec:geodesic-growth}

In this section we provide a characterisation of the geodesic growth of virtually abelian groups, in particular, we show that the geodesic growth of a virtually abelian group with respect to any finite (weighted monoid) generating set is either polynomial with rational geodesic growth series, or exponential with holonomic geodesic growth series.
This result is provided in \cref{thm:geodesic-growth}.

In \cref{lemma:path-to-special-form}, we construct a weight-preserving bijection from the words in $S^*$ to a subset of paths in a weighted graph $\Gamma$.
Then, in \cref{thm:geodesic-growth} we construct a weight-preserving bijection from the set of such paths which correspond to geodesics, and the set of words in a finite number of polyhedrally constrained languages.
Using the theory developed in \cref{sec:holonomic-functions}, we then prove our result.
We begin by constructing the graph $\Gamma$ as follows.

\begin{definition}\label{defn:graph-gamma}
	Let $\Delta$ be the map constructed in \cref{lemma:map-delta}.
	Let $\Gamma$ be the finite weighted directed edge-labelled graph defined as follows.
	For each pattern $\tau$ and each word $w \in S^*$ with $|w|_S \leqslant d = [G:\mathbb{Z}^n]$, the graph $\Gamma$ has a vertex $[\tau,w] \in \mathrm{V}(\Gamma)$.
	Suppose $\tau \in \text{\sc StrPatt}$, $|w|_S \geqslant 1$ and that $\Delta(\tau,w) = (x,\tau',w')$;
	if $|w|_S = d$, then for each word $\xi \in S^*$ with $|w'\xi|_S \leqslant d$, the graph $\Gamma$ has a labelled edge $[\tau,w] \xrightarrow{x} [\tau',w'\xi]$;
	otherwise, $1 \leqslant |w|_S < d$ and the graph $\Gamma$ has a labelled edge $[\tau,w] \xrightarrow{x} [\tau',w']$.
	Moreover, each such edge has weight $\omega(w)-\omega(w') > 0$.
\end{definition}

We are interested in paths of the following form.

\begin{definition}\label{defn:path-sets}
	For each pattern $\pi$, we write $\text{\sc Path}_\pi$ for the set of paths
	\[
	\text{\sc Path}_\pi
	=
	\left\{
		p\colon
		[\varepsilon,w]
		\to^*
		[\pi, \varepsilon]
	\mid
		w \in S^* \text{ with }|w|_S \leqslant d
	\right\}.
	\]
	We write $\text{\sc Path}$ for the union of all such sets, that is,
	$
		\text{\sc Path}
		=
		\bigcup_\pi \text{\sc Path}_\pi
	$.
\end{definition}

We count the instances of each edge label as follows.

\begin{definition}
	Let $\alpha \colon \text{\sc Path} \to \bigcup\{ \mathcal{N}_\pi \mid \pi \in \text{\sc Patt} \}$ map paths $p \in \text{\sc Path}_\pi$ to vectors in $\mathcal{N}_\pi$ such that the $i$-th component of $\alpha(p)$ counts the number of edges of $p$ that are labelled with $i$, that is, if
	$
		p\colon
		\nu_1 \xrightarrow{x_1}
		\nu_2 \xrightarrow{x_2}
		\cdots \xrightarrow{x_k}
		[\pi,\varepsilon] \in \text{\sc Path}_\pi,
	$
	then we have $\alpha(p) = v \in \mathcal{N}_\pi$ where each $v_i = \#\{j \mid x_j = i\}$.
\end{definition}

In the following lemma, we construct a weight-preserving bijection that maps from the set of words $S^*$ to the set of paths $\text{\sc Path}$.

\begin{lemma}\label{lemma:path-to-special-form}
	We may construct a weight-preserving bijection $S^* \to \text{\sc Path}$, which we denote as $\sigma \mapsto p_\sigma$, with the following properties.
	For each word $\sigma \in S^*$ where $\mathrm{Shuffle}(\sigma) = (v,\pi)$, we have $p_\sigma \in \text{\sc Path}_\pi$.
	For each path $p \in \text{\sc Path}_\pi$, there is a unique word $\sigma \in S^*$ such that $p = p_\sigma$ and $v^\pi \simeq \sigma$ where $v = \alpha(p) \in \mathcal{N}_\pi$.
\end{lemma}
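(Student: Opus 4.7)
The plan is to read the bijection directly off of \cref{algo:word-shuffling}. For the forward direction, given $\sigma \in S^*$, I would take the sequence (\ref{algo:word-shuffling/sequence}) and let $p_\sigma$ be the walk through the vertices $[\tau^{(i)}, \mathrm{Prefix}(\sigma^{(i)})]$ with the corresponding edge labels obtained from $\Delta$. Writing $w^{(i)} = \mathrm{Prefix}(\sigma^{(i)})$ and $\Delta(\tau^{(i)}, w^{(i)}) = (x_i, \tau^{(i+1)}, \widehat{w}_i)$, I would verify this is a legal path in $\Gamma$ by splitting into the two cases of \cref{defn:graph-gamma}: if $|w^{(i)}|_S = d$, then writing $\sigma^{(i)} = w^{(i)}\zeta^{(i)}$ gives $\sigma^{(i+1)} = \widehat{w}_i\zeta^{(i)}$, so $\mathrm{Prefix}(\sigma^{(i+1)}) = \widehat{w}_i\xi^{(i)}$ for the appropriate prefix $\xi^{(i)}$ of $\zeta^{(i)}$, matching the first edge type; otherwise, $w^{(i)} = \sigma^{(i)}$, and since $|\widehat{w}_i|_S < d$ we have $\sigma^{(i+1)} = \widehat{w}_i = \mathrm{Prefix}(\sigma^{(i+1)})$, matching the second. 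Since $\sigma^{(q)} = \varepsilon$ and $\tau^{(q)} = \pi$, the walk terminates at $[\pi, \varepsilon]$, so $p_\sigma \in \text{\sc Path}_\pi$. Weight preservation telescopes: each edge carries weight $\omega(w^{(i)}) - \omega(\widehat{w}_i) = \omega(\sigma^{(i)}) - \omega(\sigma^{(i+1)})$, and summing yields $\omega(\sigma)$.

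For the inverse, given a path $p \in \text{\sc Path}_\pi$ with vertex sequence $[\tau^{(i)}, w^{(i)}]$, I would reconstruct $\sigma^{(q)} = \varepsilon, \sigma^{(q-1)}, \ldots, \sigma^{(1)} = \sigma$ backwards. At each step, $\widehat{w}_i$ is already pinned down by $\Delta(\tau^{(i)}, w^{(i)})$, so I would set $\sigma^{(i)} = w^{(i)}\zeta^{(i)}$, where $\zeta^{(i)}$ is obtained by stripping the prefix $\widehat{w}_i$ from $\sigma^{(i+1)}$. A downward induction confirms that $\sigma^{(i+1)}$ really does begin with $\widehat{w}_i$: by construction $\mathrm{Prefix}(\sigma^{(i+1)}) = w^{(i+1)}$, and the edge definition forces $w^{(i+1)}$ to begin with $\widehat{w}_i$ in both cases. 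This yields a unique $\sigma$ with $p_\sigma = p$, so the two maps are mutually inverse.

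Finally, I would confirm the remaining semantic claims: chaining property~(\ref{lemma:map-delta:prop2}) of \cref{lemma:map-delta} across the sequence gives $\sigma \simeq v^\pi$, while the inductive update $u^{(i+1)} = \mathrm{Proj}_{\tau^{(i)}, \tau^{(i+1)}}(u^{(i)}) + e_{\tau^{(i+1)}, x_i}$, combined with the embedding property of $\mathrm{Proj}$ from property~(\ref{lemma:map-delta:prop1}), shows that the $j$-th coordinate of $v = u^{(q)}$ counts $\#\{i : x_i = j\}$, which is exactly $\alpha(p)_j$ (labels $x_i = \varnothing$ contribute $\mathbf{0}$ and are correctly excluded from $\alpha(p)$). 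The main obstacle will be the bookkeeping around the two edge cases, in particular keeping the invariant $\mathrm{Prefix}(\sigma^{(i)}) = w^{(i)}$ synchronised through both directions; once this invariant is firmly in place, every remaining assertion is a direct appeal to \cref{lemma:map-delta}.
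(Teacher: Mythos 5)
Your proposal is correct and follows essentially the same route as the paper: define $p_\sigma$ by reading the vertices $[\tau^{(i)},\mathrm{Prefix}(\sigma^{(i)})]$ and labels off the $\Delta$-sequence of \cref{algo:word-shuffling}, recover $\sigma$ from a path by inverting the prefix replacements, and deduce the weight preservation, $\sigma\simeq v^\pi$, and $v=\alpha(p)$ from the properties of $\Delta$ in \cref{lemma:map-delta}. Your explicit case analysis of the two edge types in \cref{defn:graph-gamma} and the downward induction maintaining $\mathrm{Prefix}(\sigma^{(i)})=w^{(i)}$ supply details the paper leaves implicit, and are exactly the right bookkeeping.
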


\begin{proof}
Let $\sigma \in S^*$, then from \cref{algo:word-shuffling} there is a finite sequence
\begin{multline}\label{eq:delta-sequence}
	((\mathbf{0},\varepsilon),\sigma)
	=
	((u^{(1)},\tau^{(1)}),\sigma^{(1)})
	\xrightarrow{\Delta}
	((u^{(2)},\tau^{(2)}),\sigma^{(2)})
	\\\cdots
	\xrightarrow{\Delta}
	((u^{(q)},\tau^{(q)}),\sigma^{(q)})
	=
	((v,\pi),\varepsilon).
\end{multline}
Let  $w^{(j)} = \mathrm{Prefix}(\sigma^{(j)})$ and $\Delta(\tau^{(j)},w^{(j)}) = (x^{(j+1)},\tau^{(j+1)},w^{(j)\prime})$.

From \cref{lemma:map-delta} we see that each $\sigma^{(j+1)} = (w^{(j)}\mapsto w^{(j)\prime})\cdot \sigma^{(j)}$.
Then,
\begin{equation}\label{eq:path-to-word}
	\sigma
	=
	(w^{(1)\prime}\mapsto w^{(1)})
	(w^{(2)\prime}\mapsto w^{(2)})
	\cdots
	(w^{(q-1)\prime}\mapsto w^{(q-1)})
	\cdot
	\varepsilon
\end{equation}
and thus
\begin{multline}\label{eq:total-weight}
	\omega(\sigma)
	=
	(\omega(w^{(1)}) - \omega(w^{(1)\prime}))
	+ (\omega(w^{(2)}) - \omega(w^{(2)\prime}))\\
	+ \cdots
	+ (\omega(w^{(q-1)}) - \omega(w^{(q-1)\prime})).
\end{multline}
Moreover, from the properties of $\Delta$ given in \cref{lemma:map-delta}, and the definition of the graph $\Gamma$ given in \cref{defn:graph-gamma}, we see that
\begin{multline}\label{eq:path-p}
	p_\sigma \colon
	[\varepsilon,w]
	=
		[\tau^{(1)}, w^{(1)}]
	\xrightarrow{x^{(2)}}
		[\tau^{(2)}, w^{(2)}]
	\xrightarrow{x^{(3)}}\\
	\cdots
	\xrightarrow{x^{(q-1)}}
		[\tau^{(q-1)}, w^{(q-1)}]
	\xrightarrow{x^{(q)}}
		[\tau^{(q)}, w^{(q)}]
	= [\pi, \varepsilon]
\end{multline}
is a path in $\text{\sc Path}_\pi$.
Notice that the weight of the path $p_\sigma$ is the same as the weight of $\sigma$ in (\ref{eq:total-weight}) and thus the map $\sigma \mapsto p_\sigma$ is weight preserving.
It remains to be shown that the map $\sigma \mapsto p_\sigma$ is a bijection.

Suppose that we are given a path $p_\sigma$ as in (\ref{eq:path-p}).
Then, we may recover the words $w^{(i)\prime}$ as $\Delta(\tau^{(j)},w^{(j)}) = (x^{(j+1)},\tau^{(j+1)},w^{(j)\prime})$.
Hence, we may recover the word $\sigma$ using equation (\ref{eq:path-to-word}).
Thus, we see that the map $\sigma \mapsto p_\sigma$ is one-to-one.
It remains to be shown that the map $\sigma \mapsto p_\sigma$ is onto, that is, for each $p \in \text{\sc Path}$, there is a word $\sigma$ such that $p = p_\sigma$.

Let $p \in \text{\sc Path}_\pi$ be a path written as 
\begin{multline*}
	p \colon
	[\varepsilon,w]
	=
	[\tau^{(1)}, w^{(1)}]
	\xrightarrow{x^{(2)}}
	[\tau^{(2)}, w^{(2)}]
	\xrightarrow{x^{(3)}}\\
	\cdots
	\xrightarrow{x^{(q-1)}}
	[\tau^{(q-1)}, w^{(q-1)}]
	\xrightarrow{x^{(q)}}
	[\tau^{(q)}, w^{(q)}]
	= [\pi, \varepsilon].
\end{multline*}
Let $\Delta(\tau^{(j)},w^{(j)}) = (x^{(j+1)},\tau^{(j+1)},w^{(j)\prime})$.
We define the words $\sigma^{(j)}$ such that
\[
	\sigma^{(j)}
	=
	(w^{(j)\prime}\mapsto w^{(j)})
	\cdot
	\sigma^{(j+1)}
\]
and $\sigma^{(q)} = \varepsilon$.
We define the vectors $u^{(j)} \in \mathcal{N}_{\tau^{(j)}}$ such that
\[
	u^{(j+1)}
	=
	\mathrm{Proj}_{\tau^{(j)},\tau^{(j+1)}}(u^{(j)})
	+ e_{\tau^{(j+1)},x^{(j+1)}}
\]
and $u^{(1)} = \mathbf{0} \in \mathcal{N}_{\tau^{(1)}}$.
From the property~\ref{lemma:map-delta:prop1} in \cref{lemma:map-delta} we see that each $|\tau^{(j)}|_P \leqslant |\tau^{(j+1)}|_P$, and thus from \cref{defn:projection} we see that
$u^{(q)} = \alpha(p)$.
Let $\sigma = \sigma^{(1)}$ and $v = u^{(q)}$, then we see that
\begin{multline*}
	((\mathbf{0},\varepsilon),\sigma)
	=
	((u^{(1)},\tau^{(1)}),\sigma^{(1)})
	\xrightarrow{\Delta}
	((u^{(2)},\tau^{(2)}),\sigma^{(2)})
	\\\cdots
	\xrightarrow{\Delta}
	((u^{(q)},\tau^{(q)}),\sigma^{(q)})
	=
	((v,\pi),\varepsilon).
\end{multline*}
From this, we see that $\mathrm{Shuffle}(\sigma) = (v,\pi)$ and that $p = p_\sigma$ with $\sigma \simeq v^\pi$ where $v = \alpha(p) \in \mathcal{N}_\pi$.
Moreover, we see that the map $\sigma \mapsto p_\sigma$ is onto.
\end{proof}

We may now prove our first main theorem as follows.

\begin{theorem}\label{thm:geodesic-growth}
	Let $G$ be a virtually abelian group with a finite (weighted monoid) generating set $S$.
	Then either the geodesic growth with respect to $S$ is polynomial with rational geodesic growth series; or the geodesic growth with respect to $S$ is exponential with holonomic geodesic growth series.
\end{theorem}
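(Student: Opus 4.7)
The plan is to combine the weight-preserving bijection $\sigma \mapsto p_\sigma$ of \cref{lemma:path-to-special-form} with the polyhedral characterisation of geodesics in \cref{lemma:geodesics-in-special-form}, and then show that the resulting generating function is holonomic via \cref{prop:polyhedrally-constrained-is-holonomic}; the dichotomy will then follow from \cref{cor:dichotomy-of-characterisation}.

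More concretely, I would first fix an alphabet $E$ consisting of the edges of the finite graph $\Gamma$ and assign to each edge $e \in E$ the weight prescribed in \cref{defn:graph-gamma}. For each pattern $\pi$, the set $\text{\sc Path}_\pi$ corresponds to a regular (hence unambiguous context-free) sub-language $U_\pi \subseteq E^*$: namely, words over $E$ whose consecutive edges are composable and which terminate at the vertex $[\pi,\varepsilon]$ (starting from some vertex of the form $[\varepsilon,w]$). The edge-label map $E \to \{1,\ldots,(k+1)m\}\cup\{\varnothing\}$, where $k = |\pi|_P$, extends to an integer affine transform $\Lambda_\pi\colon \mathbb{Z}^{|E|} \to \mathcal{Z}_\pi$ relating the Parikh vector of a path over $E$ to the vector $\alpha(p) \in \mathcal{N}_\pi$. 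Then \cref{lemma:path-to-special-form} together with \cref{lemma:geodesics-in-special-form} identifies the set of geodesic paths ending at $[\pi,\varepsilon]$ with $L(U_\pi,\Lambda_\pi^{-1}(\mathcal{G}_\pi))$, which is polyhedrally constrained by the closure property in \cref{prop:affine-transforms-of-polyhedral-sets}. By \cref{prop:polyhedrally-constrained-is-holonomic}, its multivariate generating function $F_\pi(\mathbf{x}) \in \mathbb{C}[[\mathbf{x}]]$, with one variable per edge, is holonomic.

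Next, to recover the weighted single-variable geodesic growth series, I would substitute $x_e = z^{\omega(e)}$ for each $e \in E$; since each $\omega(e)$ is a positive integer and each $z^{\omega(e)}$ is a polynomial, \cref{lemma:holonomic-closure-properties} guarantees that $F_\pi(z^{\omega(e_1)},z^{\omega(e_2)},\ldots)$ is a well-defined holonomic power series in $z$. The bijection of \cref{lemma:path-to-special-form} is weight-preserving, and the number of patterns $\pi$ is finite by \cref{defn:pattern-set}, so summing over $\pi$ and again applying \cref{lemma:holonomic-closure-properties} gives
\[
	f_S(z) = \sum_{\pi \in \text{\sc Patt}} F_\pi(z^{\omega(e_1)},z^{\omega(e_2)},\ldots),
\]
which is holonomic. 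The theorem then follows immediately from \cref{cor:dichotomy-of-characterisation}.

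The main obstacle I anticipate is step two, namely keeping the encoding of paths consistent so that (i) $U_\pi$ is genuinely unambiguous context-free (this is clean because paths in a directed graph ending at a specified vertex form a regular language under the natural encoding), and (ii) the geodesic condition $\alpha(p) \in \mathcal{G}_\pi$ really does pull back to a polyhedral constraint on the Parikh vector over $E$ — this is where it is crucial that $\Lambda_\pi$ is an integer affine transform and that $\mathcal{G}_\pi$ is polyhedral, so that \cref{prop:affine-transforms-of-polyhedral-sets,prop:closure-properties-of-polyhedral-sets} apply. Once these bookkeeping points are carefully set up, the holonomicity argument and the application of the P\'olya–Carlson dichotomy are essentially mechanical.
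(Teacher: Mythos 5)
Your proposal follows essentially the same route as the paper: encode paths of $\Gamma$ as words over the edge alphabet, pull back $\mathcal{G}_\pi$ through an integer affine transform to get a polyhedrally constrained language for each pattern, apply \cref{prop:polyhedrally-constrained-is-holonomic}, substitute $z^{\omega(e)}$ for each variable, sum over the finitely many patterns, and finish with \cref{cor:dichotomy-of-characterisation}. The only slip is your final identity: since $\gamma_S(n)$ counts geodesics of weight \emph{at most} $n$, the sum $\sum_{\pi} F_\pi(z^{\omega(e_1)},\ldots)$ must still be multiplied by $\tfrac{1}{1-z}$ to obtain $f_S(z)$ — a harmless correction, as holonomicity is preserved under multiplication by a rational function.
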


\begin{proof}

From \cref{lemma:path-to-special-form}, we may compute the geodesic growth function as
\begin{equation}\label{eq:geod-growth-sum}
	\gamma_S(k)
	=
	\sum_{\pi \in \text{\sc Patt}}
	\#
	\{
		p \in \text{\sc Path}_\pi
	\mid
		\omega(p) \leqslant k
		\text{ and }\alpha(p) \in \mathcal{G}_\pi
	\}
\end{equation}
where $\omega(p)$ is the weight of $p$, and $\mathcal{G}_\pi$ is the polyhedral set in \cref{lemma:geodesics-in-special-form}.
Notice that (\ref{eq:geod-growth-sum}) is a finite sum as we have only finitely many patterns.

Let $\Sigma$ be the weighted finite alphabet which contains the edges of $\Gamma$, that is, for each edge $e\colon \nu_1 \xrightarrow{x} \nu_2$ in $\Gamma$, there is a letter $(\nu_1,x,\nu_2) \in \Sigma$ with weight $\omega(e)$.
Then, for each pattern $\pi$, we have a weight-preserving bijection from the paths in $\text{\sc Path}_\pi$ to the words in a language $L_\pi \subseteq \Sigma^*$, in particular, the language $L_\pi$ contains all words of the form
\[
	([\varepsilon,w],x_1,\nu_1)
	(\nu_1,x_2,\nu_2)
	(\nu_2,x_3,\nu_3)
	\cdots
	(\nu_k,x_{k+1},[\pi,\varepsilon]) \in \Sigma^*.
\]
Notice that each $L_\pi$ is a regular language.

We write $\Phi(\nu_1,x,\nu_2) \in \mathbb{N}^{|\Sigma|}$ to denote the Parikh vector corresponding to the letter $(\nu_1,x,\nu_2) \in \Sigma$.
For each pattern $\pi$, we define an integer affine transform $E_\pi \colon \mathbb{Z}^{|\Sigma|} \to \mathcal{Z}_\pi$ such that $E_\pi(\Phi(\nu_1,x,\nu_2)) = e_{\pi,x}$ is the $x$-th standard basis element for each $x \in \{1,2,\ldots,\dim(\mathcal{Z}_\pi)\}$, and $E_\pi(\Phi(\nu_1,x,\nu_2)) = \mathbf{0}$ otherwise.
Let $w \in L_\pi$ be the word corresponding to the path $p \in \text{\sc Path}_\pi$, then we see that $\alpha(p) = E_\pi(\Phi(w))$.
From \cref{lemma:path-to-special-form}, we see that the path $p$ corresponds to a geodesic if and only if $\Phi(w) \in E^{-1}_\pi (\mathcal{G}_\pi)$.

For each pattern $\pi$, we define the constrained language $L^\mathrm{geod}_\pi \subseteq L_\pi$ as
\[
	L_\pi^\mathrm{geod}
	=
	\{
		w \in L_\pi
	\mid
		\Phi(w) \in E^{-1}(\mathcal{G}_\pi)
	\}.
\]
Notice that there is a weight-preserving bijection between $L^\mathrm{geod}_\pi$ and the set of geodesics $\sigma \in S^*$ with $p_\sigma \in \text{\sc Path}_\pi$.
From \cref{prop:affine-transforms-of-polyhedral-sets} we see that $E^{-1}(\mathcal{G}_\pi)$ is a polyhedral set and thus each $L^\mathrm{geod}_\pi$ is a polyhedrally constrained language, as studied in \cref{sec:polyhedrally-constrained-languages}.
Then, from \cref{prop:polyhedrally-constrained-is-holonomic} we see that the multivariate generating function $f_\pi(x_1,x_2,\ldots,x_{|\Sigma|})$ of each $L^\mathrm{geod}_\pi$ is holonomic.

Let $a_{\pi,i}\in \mathbb{N}_+$ be the weight of the letter that corresponds to the variable $x_{i}$ in the generating function $f_\pi(x_1,x_2,\ldots,x_{|\Sigma|})$.
Let $h_\pi(z) \in \mathbb{C}[[z]]$ be defined as
\[
	h_\pi(z) =
	f_\pi(z^{a_{\pi,1}}, z^{a_{\pi,2}}, \ldots, z^{a_{\pi,|\Sigma|}}).
\]
Then we see that the coefficient of $z^k$ in $h_\pi(z)$ counts the geodesics $\sigma \in S^*$ for which $p_\sigma \in \text{\sc Path}_\pi$ and $\omega(\sigma) = k$.
Let $g(z) \in \mathbb{C}[[z]]$ be defined as
\[
	g(z) = \frac{1}{1-z} \cdot \sum_{\pi \in \text{\sc Patt}} h_\pi(z),
\]
Then we see that the coefficient of $z^k$ in $g(z)$ is given by $\gamma_S(k)$, that is, $g(z)$ is the geodesic growth series $g(z) = \sum_{k = 0}^\infty \gamma_S(k) z^k$.
Moreover, from the closure properties in \cref{lemma:holonomic-closure-properties} we see that the function $g(z)$ is holonomic.

Our result then follows from \cref{cor:dichotomy-of-characterisation}.
\end{proof}

\section{Language of Geodesics}\label{sec:blind-multicounter-automata}

In the previous section we found a weight-preserving bijection between the geodesics in a virtually abelian group and some subset of paths in a finite graph $\Gamma$, and showed that the language of all such paths is \emph{polyhedrally constrained} as defined in \cref{sec:polyhedrally-constrained-languages}.
However, this bijection cannot be used to immediately generalise this result to a characterisation of the language of geodesics.
Instead, in this section we show that the language of geodesics for a virtually abelian group belongs to the family of \emph{blind multicounter languages}.

Informally, a \emph{blind $k$-counter automaton}, as studied by Greibach~\cite{greibach1978}, is a nondeterministic finite-state acceptor with a one-way input tape and $k$ integer counters;
where such a machine is allowed to increment and decrement its counters by fixed amounts only during transitions.
Moreover, the transitions of such a machine are not allowed to depend on the state of its integer counters.
A computation of such a machine begins with zero on all its counters, and accepts when it is in an accepting state with all input consumed and zero on all its counters.
A language $L$ is called \emph{blind multicounter} if there is a blind $k$-counter automaton which accepts precisely $L$.

Blind multicounter automata satisfy the language hierarchy in \cref{fig:language-hierarchy}, in particular, we may construct a language for each section of \cref{fig:language-hierarchy} as follows.
We see that the class of finite-state automata is equivalent to the class of blind $0$-counter automata, and that each blind $k$-counter language is also a blind $(k+1)$-counter language.
We see that blind 1-counter language is a subclass of context-free.
Moreover, from \cite[Theorem~1]{greibach1978} it can be seen that the class of blind multicounter languages is a subclass of context-sensitive languages.
It is straightforward to see that the word problem for $F_2 \times F_2$ is context-sensitive (see~\cite{shapiro1994}).
It is a classic result by Muller and Schupp~\cite{muller1983} that the word problem for a group is context-free if and only if the group is virtually free.
Moreover, it was shown in \cite{elder2008} that the word problem for a group is blind $k$-counter if and only if the group is virtually $\mathbb{Z}^m$ for some $m \leqslant k$.
From these characterisations we see that the word problem for the free group $F_2$ is context-free but not blind multicounter; for each $k \geqslant 2$, the word problem for $\mathbb{Z}^k$ is blind $k$-counter but not context-free; the word problem for $\mathbb{Z}^{k+1}$ is blind $(k+1)$-counter but not blind $k$-counter; and that the word problem for $F_2 \times F_2$ is context-sensitive and neither context-free nor blind multicounter.
From the proof of Theorem~5 in \cite{greibach1978}, we see that
\[
	L_k
	=
	\{
		a_1^{n_1}
		a_2^{n_2}
		\cdots
		a_k^{n_k}
		b_k^{n_k}
		\cdots
		b_2^{n_2}
		b_1^{n_1}
	\mid
		n_1,n_2,\ldots,n_k \in \mathbb{N}
	\}
\]
is context-free and blind $k$-counter, but not blind $(k-1)$-counter.

\begin{figure}[!ht]
	\centering
	
	\begin{tikzpicture}[xscale=3.5,yscale=2]
	
	\coordinate (zero) at (0,0);
	
	\node [ellipse,above=0pt of zero,draw] (regular) {regular};
	\coordinate (regular-top) at ($(regular.north)$);
	
	%%%%%%%%%%%%%%%%%%%%%%%%%%%%%%%%%%%%%%%%%%%%%%%%%%%%%%
	
	\node [above=0.25em of regular-top] (1-counter-text) {blind 1-counter};
	
	\draw [inner sep=0pt,outer sep=0pt,draw] let
		\p1=($(1-counter-text.north)$),
		\p2=($(1-counter-text.east)$),
		\n1={\x2/sin(180 - 2*atan2(\y1,\x2))}
	in
		(0,\n1) circle (\n1);
	\path let
		\p1=($(1-counter-text.north)$),
		\p2=($(1-counter-text.east)$),
		\n1={\x2/sin(180 - 2*atan2(\y1,\x2))}
	in coordinate
		(1-counter-top) at (0,2*\n1);
	
	%%%%%%%%%%%%%%%%%%%%%%%%%%%%%%%%%%%%%%%%%%%%%%%%%%%%%%
	
	\node [below=0pt of zero] (context-free-text) {context-free};
	
	\draw [inner sep=0pt,outer sep=0pt,draw,thick] let
		\p1=($(1-counter-top)$),
		\p2=($(context-free-text.east)$),
		\p3=($(context-free-text.south)$),
		\p0=($(\x2,\y1-\y3+0.25em)$),
		\n1={\x0/sin(180 - 2*atan2(\x0,\y0))}
	in
		($(0,\y1-\n1+0.25em)$) ellipse (1.5*\n1 and \n1);
	\path let
		\p1=($(1-counter-top)$),
		\p2=($(context-free-text.east)$),
		\p3=($(context-free-text.south)$),
		\p0=($(\x2,\y1-\y3+0.25em)$),
		\n1={\x0/sin(180 - 2*atan2(\x0,\y0))}
	in coordinate
		(context-free-bottom) at (0,\y1-2*\n1+0.25em);
	
	%%%%%%%%%%%%%%%%%%%%%%%%%%%%%%%%%%%%%%%%%%%%%%%%%%%%%%
	
	\node [above=0.5em of 1-counter-top] (2-counter-text) {blind 2-counter};
	
	\draw [inner sep=0pt,outer sep=0pt,draw] let
		\p1=($(2-counter-text.north)$),
		\p2=($(2-counter-text.east)$),
		\n1={\x2/sin(180-2*atan2(\x2,\y1))}
	in
		(0,\n1) circle (\n1);
	\path let
		\p1=($(2-counter-text.north)$),
		\p2=($(2-counter-text.east)$),
		\n1={\x2/sin(180-2*atan2(\x2,\y1))}
	in coordinate
		(2-counter-top-real) at (0,2*\n1);
	
	%%%%%%%%%%%%%%%%%%%%%%%%%%%%%%%%%%%%%%%%%%%%%%%%%%%%%%
	
	\node [above=0em of 2-counter-top-real] (2-counter-top) {$\makeatletter
		\vbox{
			\baselineskip4\p@\lineskiplimit\z@
			\kern-\p@
			\hbox{.}\hbox{.}\hbox{.}
		}
		\makeatother$};
	
	%%%%%%%%%%%%%%%%%%%%%%%%%%%%%%%%%%%%%%%%%%%%%%%%%%%%%%
	
	\node [above=0em of 2-counter-top] (3-counter-text) {blind $k$-counter};
	
	\draw [inner sep=0pt,outer sep=0pt,draw] let
		\p1=($(3-counter-text.north)$),
		\p2=($(3-counter-text.east)$),
		\n1={\x2/sin(180-2*atan2(\x2,\y1))}
	in
		(0,\n1) circle (\n1);
	\path let
		\p1=($(3-counter-text.north)$),
		\p2=($(3-counter-text.east)$),
		\n1={\x2/sin(180-2*atan2(\x2,\y1))}
	in coordinate
		(3-counter-top) at (0,2*\n1);
	
	%%%%%%%%%%%%%%%%%%%%%%%%%%%%%%%%%%%%%%%%%%%%%%%%%%%%%%
	
	\node [above=0em of 3-counter-top] (dots) {$\makeatletter
		\vbox{
			\baselineskip4\p@\lineskiplimit\z@
			\kern-\p@
			\hbox{.}\hbox{.}\hbox{.}
		}
		\makeatother$};
	
	%%%%%%%%%%%%%%%%%%%%%%%%%%%%%%%%%%%%%%%%%%%%%%%%%%%%%%
	
	\node [above=0.25em of dots] (k-counter-text) {blind multicounter};
	
	\draw [inner sep=0pt,outer sep=0pt,draw] let
		\p1=($(k-counter-text.north)$),
		\p2=($(k-counter-text.east)$),
		\n1={\x2/sin(180-2*atan2(\x2,\y1))}
	in
		(0,\n1) circle (\n1);
	\path let
		\p1=($(k-counter-text.north)$),
		\p2=($(k-counter-text.east)$),
		\n1={\x2/sin(180-2*atan2(\x2,\y1))}
	in coordinate
		(k-counter-top) at (0,2*\n1);
	
	%%%%%%%%%%%%%%%%%%%%%%%%%%%%%%%%%%%%%%%%%%%%%%%%%%%%%%
	
	\node [below=0.25em of context-free-bottom] (context-sensitive-text) {context-sensitive};
	
	\draw [inner sep=0pt,outer sep=0pt,draw] let
		\p1=($(k-counter-top)$),
		\p2=($(context-sensitive-text.east)$),
		\p3=($(context-sensitive-text.south)$),
		\p0=($(\x2,\y1-\y3+0.25em)$),
		\n1={\x0/sin(180-2*atan2(\x0,\y0))}
	in
		($(0,\y1-\n1+0.25em)$) circle (\n1);
	
	%%%%%%%%%%%%%%%%%%%%%%%%%%%%%%%%%%%%%%%%%%%%%%%%%%%%%%
	
	\end{tikzpicture}
	
	\caption{Hierarchy of blind multicounter language.}\label{fig:language-hierarchy}
\end{figure}
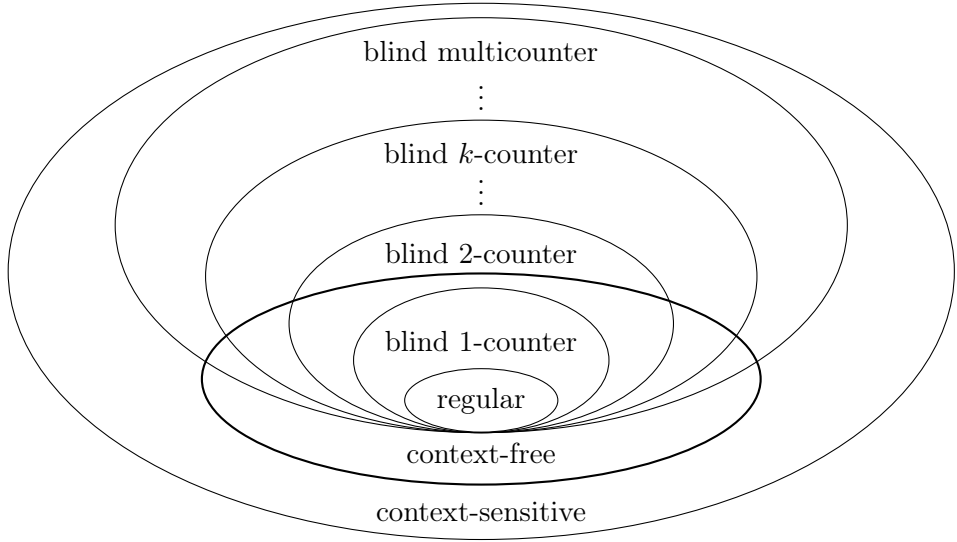

Our definition of blind multicounter automata differs slightly from the one given by Greibach in~\cite{greibach1978}.
In particular, we introduce $\mathfrak{e}$ as an end of input symbol, and allow our automata to add and subtract any constant vector from their counters on a transition instead of only allowing basis vectors.
However, it is clear that this does not increase the expressive power of our model.
Formally, we define a blind $k$-counter automaton follows.

\begin{definition}\label{defn:blind-k-counter-automata}
	Let $k \in \mathbb{N}$, then a \emph{blind $k$-counter machine} is a $6$-tuple of the form $M = (Q,\Sigma,\delta,q_0,F, \mathfrak{e})$ where
	\begin{enumerate}
		\item $Q$ is a finite set of \emph{states};
		\item $\Sigma$ is a finite \emph{input alphabet};
		\item $\delta$ is a finite subset of
		\[
			\big(
				Q
				\times
				(\Sigma \cup \{\varepsilon, \mathfrak{e}\})
			\big)
			\times
			\big(
				Q
				\times
				\mathbb{Z}^k
			\big)
		\]
		which we call the \emph{transition relation};
		\item $q_0 \in Q$ is the \emph{initial state};
		\item $F \subseteq Q$ is the set of \emph{final states}; and
		\item $\mathfrak{e} \notin \Sigma$ is the \emph{end of tape symbol}.
	\end{enumerate}
\end{definition}

Let $M = (Q,\Sigma,\delta,q_0,F, \mathfrak{e})$ be a bind $k$-counter automaton.
Then $M$ begins in state $q_0$ with zero on all its counters.
Suppose that there is a transition relation $((q,a),(p,v)) \in \delta$ with $p,q \in Q$, $a \in \Sigma \cup \{\varepsilon,\mathfrak{e}\}$ and $v \in \mathbb{Z}^k$;
if $M$ is in state $q$ with $a$ as the next letter on its input tape, then it can transition to state $p$ after adding $v$ to its counters and consuming $a$ from its input tape.
The machine accepts if it is in a state of the form $q \in F$ with no letters remaining on its input tape and zero on all its counters.

Formally, we represent the configuration of a blind $k$-counter automaton $M$ with an \emph{instantaneous description} of the form
\[
	(
		q,(c_1,c_2,\ldots,c_k),\sigma\mathfrak{e}
	)
	\in Q \times \mathbb{Z}^k \times \Sigma^*\mathfrak{e}
\]
where $q \in Q$ is the \emph{current state}, $(c_1,c_2,\ldots,c_k) \in \mathbb{Z}^k$ are the values of the counters, and $\sigma \in \Sigma^*$ is the sequence of letters which have yet to be consumed.
Let $C_1$ and $C_2$ be instantaneous descriptions for the configuration of $M$.
Then we write $C_1 \vdash C_2$ if $M$ can move from configuration $C_1$ to $C_2$ in a single transition.
Formally, we interpret the transition relation $\delta$ as follows.

For each relation of the form $((q,s),(p,v)) \in \delta$ with $s \in \Sigma\cup\{\varepsilon\}$, and for each $\sigma = s\sigma' \in \Sigma^*$, we have transitions of the form
\[
	(
		q,(c_1,c_2,\ldots,c_k),\sigma\mathfrak{e}
	)
	\vdash
	(
		p,(c_1+v_1,c_2+v_2,\ldots,c_k+v_k),\sigma'\mathfrak{e}
	).
\]
Moreover, for each relation $((q,\mathfrak{e}),(p,v)) \in \delta$ we have transitions
\[
	(
		q,(c_1,c_2,\ldots,c_k),\mathfrak{e}
	)
	\vdash
	(
		p,(c_1+v_1,c_2+v_2,\ldots,c_k+v_k),\mathfrak{e}
	).
\]
Notice that we do not consume the end of tape symbol $\mathfrak{e}$.

We then write $\vdash^*$ for the transitive symmetric closure of $\vdash$, that is, we have $C_1 \vdash^* C_2$ if $M$ can move from $C_1$ to configuration $C_2$ in finitely many transitions.
We then say that a word $\sigma \in \Sigma^*$ is accepted by $M$ if
\[
	(q_0,(0,0,\ldots,0),\sigma\mathfrak{e})
	\vdash^*
	(q,(0,0,\ldots,0),\mathfrak{e})
\]
for some $q \in F$.
The language of $M$ is defined as
\[
	L(M)
	=
	\left\{
		\sigma \in \Sigma^*
	\mid
		(q_0,(0,0,\ldots,0),\sigma\mathfrak{e})
		\vdash^*
		(q,(0,0,\ldots,0),\mathfrak{e})
		\text{ where }
		q \in F
	\right\},
\]
that is, $L(M)$ is the language of all words accepted by $M$.

\begin{theorem}\label{thm:virtually-abelian-are-blind-counter}
	The language of geodesics of a virtually abelian group with respect to any finite (weighted monoid) generating set $S$ is blind multicounter.
\end{theorem}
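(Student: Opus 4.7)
The plan is to construct, for each finite weighted monoid generating set $S$ of the virtually abelian group $G$, a blind multicounter automaton $M$ that accepts exactly the set of geodesics in $S^*$. The strategy is to have $M$ simulate \cref{algo:word-shuffling} on the input $\sigma$ so as to compute, in its state and counters, the patterned word $\mathrm{Shuffle}(\sigma) = (v,\pi)$, and then verify that $v \in \mathcal{G}_\pi$; by \cref{lemma:geodesics-in-special-form} this is equivalent to $\sigma$ being a geodesic.

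I would first build the finite-state skeleton. The states of $M$ track the current pattern $\tau \in \text{\sc Patt}$ together with a buffer $w \in S^*$ of length at most $d$, both ranging over finite sets, starting from $(\varepsilon,\varepsilon)$. Reading a letter from $S$ appends it to $w$; as soon as $|w|_S = d$, an $\varepsilon$-transition applies $\Delta(\tau,w) = (x,\tau',w')$, moving to state $(\tau',w')$ and signalling to the counters that coordinate $x$ of $v$ has just been incremented. On the end-of-tape symbol $\mathfrak{e}$, further $\Delta$-steps are applied via $\mathfrak{e}$-transitions until the buffer empties. Inside any fixed nondeterministic branch this is a deterministic simulation, and by \cref{lemma:path-to-special-form} it faithfully produces the patterned word $(v,\pi) = \mathrm{Shuffle}(\sigma)$.

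Next, I would encode the condition $v \in \mathcal{G}_\pi$ using the counters. At the start, $M$ nondeterministically guesses both the target pattern $\pi$ and a basic polyhedral set $\mathcal{B}$ in a fixed disjoint decomposition of $\mathcal{G}_\pi$, and stores this choice in its finite state; acceptance will require that the simulation really terminates at pattern $\pi$. For each equality $a \cdot v = b$ defining $\mathcal{B}$, a dedicated counter is initialised to $-b$ (via $\varepsilon$-transitions at the start) and updated by the corresponding coordinate of $a$ each time a $\Delta$-step increments the matching coordinate of $v$; acceptance demands that this counter equal $0$. For each strict inequality $a \cdot v > b$, a counter holds $a \cdot v - b$; during a post-input verification phase at $\mathfrak{e}$ it is first decremented by $1$ and then subjected to a loop of $\mathfrak{e}$-transitions each subtracting $1$, so that reaching the acceptance value $0$ is possible exactly when $a \cdot v - b \geqslant 1$. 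The modular conditions $a \cdot v \equiv b \pmod{c}$ are kept purely in the finite state, since only $c$ residues are possible and $c$ is a fixed constant.

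The main obstacle is this handling of strict inequalities under the blind acceptance condition ``all counters equal zero'': no direct positivity test is available, and only the subtract-one-then-nondeterministically-descend-to-zero trick above converts a strict linear inequality into a zero-counter check. A secondary subtlety is that the coefficient by which a linear-form counter is updated at a $\Delta$-step depends on which coordinate of $v$ is incremented, which in turn depends on the current pattern $\tau$; however, all of this data lives in the finite state, so the counter-update vectors can simply be tabulated in the transition relation. Correctness then breaks into two directions: any accepting computation witnesses, by \cref{lemma:path-to-special-form}, a patterned word $(v,\pi) \simeq \sigma$ with $v$ in some basic polyhedral subset of $\mathcal{G}_\pi$, hence $\sigma$ is geodesic; conversely, if $\sigma$ is geodesic then $\mathrm{Shuffle}(\sigma) = (v,\pi)$ with $v \in \mathcal{G}_\pi$, so $v$ lies in exactly one basic set $\mathcal{B}$ of the decomposition, and guessing $(\pi,\mathcal{B})$ at the start yields an accepting run.
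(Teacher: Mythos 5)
Your proposal is correct and follows essentially the same construction as the paper: nondeterministically guess a basic polyhedral set in the decomposition of $\mathcal{G}_\pi$, simulate \cref{algo:word-shuffling} in the finite state (pattern plus bounded buffer) while accumulating the linear forms of the constraints on the counters, and verify each strict inequality $a\cdot v>b$ by subtracting $b+1$ and then nondeterministically descending to zero. The only (harmless) deviation is that you track the modular constraints in the finite state, whereas the paper keeps $\chi\cdot v$ on a counter and clears it at the end with $\pm\theta$ loops; both work.
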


\begin{proof}

Let $G$ be a virtually abelian group that is generated as a monoid by a finite weighted set $S$, and let $\mathbb{Z}^n \triangleleft G$ with finite index $d = [G : \mathbb{Z}^n]$.
Let $\sigma \in S^*$, then from \cref{algo:word-shuffling} we have a patterned word $(v,\pi) = \mathrm{Shuffle}(\sigma)$ for which $v^\pi \simeq \sigma$ and thus $\sigma$ is a geodesic if and only if $v \in \mathcal{G}_{\pi}$ where $\mathcal{G}_\pi \subseteq \mathcal{N}_\pi$ is the polyhedral set given by \cref{lemma:geodesics-in-special-form}.

The idea of our proof is to simulate \cref{algo:word-shuffling} on a blind multicounter automaton, while maintaining enough information on the machine's counters so that we may verify the membership of the vector $v$ to the set $\mathcal{G}_{\pi}$.

For each polyhedral set $\mathcal{G}_\pi$, we fix a finite union of basic polyhedral sets
\[
	\mathcal{G}_{\pi}
	=
	\bigcup_{i=1}^{N_\pi}
	\mathcal{B}_{\pi,i}.
\]
Then, for each basic polyhedral set $\mathcal{B}_{\pi,i}$, we fix a finite intersection
\begin{multline}\label{eq:decompose-basic-poly}
	\mathcal{B}_{\pi,i}
	=
	\bigcap_{j=1}^{K_{\pi,i,1}}
		\left\{
			z \in \mathcal{Z}_\pi
		\,\middle\vert\,
			\alpha_{\pi,i,j} \cdot z > \beta_{\pi,i,j}
		\right\}
	\\\cap
	\bigcap_{j=1}^{K_{\pi,i,2}}
		\left\{
			z \in \mathcal{Z}_\pi
		\,\middle\vert\,
			\chi_{\pi,i,j} \cdot z \equiv \eta_{\pi,i,j}\ (\bmod\ \theta_{\pi,i,j})
		\right\}
	\\\cap
	\bigcap_{j=1}^{K_{\pi,i,3}}
		\left\{
			z \in \mathcal{Z}_\pi
		\,\middle\vert\,
			\xi_{\pi,i,j} \cdot z = \lambda_{\pi,i,j}
		\right\}
\end{multline}
where $\alpha_{\pi,i,j},\chi_{\pi,i,j},\xi_{\pi,i,j} \in \mathcal{Z}_\pi$, $\beta_{\pi,i,j},\eta_{\pi,i,j},\lambda_{\pi,i,j} \in \mathbb{Z}$ and $\theta_{\pi,i,j} \in \mathbb{N}_+$.

Let $k \in \mathbb{N}$ be such that $k \geqslant K_{\pi,i,1} + K_{\pi,i,2} + K_{\pi,i,3}$ for each basic polyhedral set $\mathcal{B}_{\pi,i}$.
In the remainder of this proof, we construct a blind $k$-counter automaton $M = (Q,S,\delta,q_0,F,\mathfrak{e})$ that recognises the language of geodesics.
Notice that the input alphabet of the machine is the generating set $S$.

For each basic polyhedral set $\mathcal{B}_{\pi,i}$, we define a map $C_{\pi,i} \colon \mathcal{N}_\pi \to \mathbb{Z}^k$ as
\begin{multline}\label{eq:configuration-of-counters}
	C_{\pi,i}(v) = (
		\alpha_{\pi,i,1} \cdot {v},\,
		\alpha_{\pi,i,2} \cdot {v},\,
		\ldots,\,
		\alpha_{\pi,i,K_{\pi,i,1}} \cdot {v},
	\\
		\chi_{\pi,i,1} \cdot {v},\,
		\chi_{\pi,i,2} \cdot {v},\,
		\ldots,\,
		\chi_{\pi,i,K_{\pi,i,2}} \cdot {v},\,
	\\
		\xi_{\pi,i,1} \cdot {v},\,
		\xi_{\pi,i,2} \cdot {v},\,
		\ldots,\,
		\xi_{\pi,i,K_{\pi,i,3}} \cdot {v},
		0,0,\ldots,0
	).
\end{multline}
Notice that a vector $v \in \mathcal{N}_{\pi}$ belongs to $\mathcal{B}_{\pi,i}$ if and only if
\[
	C_{\pi,i}(v)
	=
	(
		a_1, a_2,\ldots, a_{K_{\pi,i,1}},\,
		b_1, b_2,\ldots, b_{K_{\pi,i,2}},\,
		c_1, c_2,\ldots, c_{K_{\pi,i,3}},\,
		0,0,\ldots,0
	)
\]
where each $a_{j} > \beta_{\pi,i,j}$, each $b_j \equiv \eta_{\pi,i,j}\ (\bmod\ \theta_{\pi,i,j})$ and each $c_{j} = \lambda_{\pi,i,j}$.

\medskip%
\noindent%
\textit{State-Space of the Machine.}%
\nopagebreak\smallskip%
\nopagebreak\par%
\nopagebreak\noindent%
For each $\tau \in \text{\sc Patt}$, each basic polyhedral set $\mathcal{B}_{\pi,i}$, and each word $w \in S^*$ with $|w|_S \leqslant d$, we have a state of the form $[\tau,w,\pi,i] \in Q$.
From these states, the machine will perform \cref{algo:word-shuffling} on its input word.

During the construction of our machine, we will ensure that if
\[
	(q_0,(0,0,\ldots,0),\sigma\mathfrak{e})
	\vdash^*
	([\tau,w,\pi,i],(c_1,c_2,\ldots,c_k),\zeta\mathfrak{e}),
\]
then there is a $u \in \mathcal{N}_\tau$ with $u^\tau w \zeta \simeq \sigma$ and $(c_1,c_2,\ldots,c_k) = C_{\pi,i}(\mathrm{Proj}_{\tau,\pi}(u))$.
In particular, this vector will correspond to some vector $u^{(i)}$ in the sequence of extended patterned words given in (\ref{algo:word-shuffling/sequence}) as constructed in \cref{algo:word-shuffling}.

For each basic polyhedral set $\mathcal{B}_{\pi,i}$, we have an accepting state $q_{\pi,i} \in F$.
Moreover, our construction will have the property that $(q_0,\mathbf{0},\sigma\mathfrak{e})\vdash^*(q_{\pi,i},\mathbf{0},\mathfrak{e})$ if and only if $\mathrm{Shuffle}(\sigma) = (v,\pi)$ with $v \in \mathcal{B}_{\pi,i}$,
and thus the machine $M$ will accept the word $\sigma$ if and only if it is a geodesic.

\medskip%
\noindent%
\textit{Nondeterministically Guessing a Basic Polyhedral Set.}%
\nopagebreak\smallskip%
\nopagebreak\par%
\nopagebreak\noindent%
The machine $M$ begins simulating the word shuffling algorithm after nondeterministically guessing a basic polyhedral set $\mathcal{B}_{\pi,i}$ for which $\mathrm{Shuffle}(\sigma) = (v,\pi)$ with $v \in \mathcal{B}_{\pi,i}$.
Notice that such a choice of basic polyhedral set exists if and only if $\sigma$ is a geodesic.
We accomplish this by introducing a relation
\[
	((q_0,\varepsilon),([\varepsilon,\varepsilon,\pi,i],\mathbf{0})) \in \delta
\]
for each basic polyhedral set $\mathcal{B}_{\pi,i}$,
that is, we have a transition
\begin{equation}\label{eq:first-transition}
	(q_0,(0,0,\ldots,0),\sigma\mathfrak{e})
	\vdash
	([\varepsilon,\varepsilon,\pi,i],(0,0,\ldots,0),\sigma\mathfrak{e})
\end{equation}
for each $\mathcal{B}_{\pi,i}$.
Notice that $\mathbf{0}^\varepsilon\sigma \simeq \sigma$ and $(0,0,\ldots,0) = C_{\pi,i}(\mathrm{Proj}_{\varepsilon,\pi}(\mathbf{0}))$.

\medskip%
\noindent%
\textit{Performing the Word Shuffling Algorithm.}%
\nopagebreak\smallskip%
\nopagebreak\par%
\nopagebreak\noindent%
For each extended strongly patterned word $((u^{(i)},\tau^{(i)}),\sigma^{(i)})$ in sequence (\ref{algo:word-shuffling/sequence}) in \cref{algo:word-shuffling}, we will see that $M$ has configurations of the form
\[
	([w,\tau^{(i)},\pi,i],C_{\pi,i}(u^{(i)}),\zeta)
\]
where $\sigma^{(i)} = w\zeta$.
In order to apply the map $\Delta$ from such a configuration we will require that $w = \mathrm{Prefix}(\sigma)$.
We do so by introducing transitions as follows.

Let $([\tau,w,\pi,i], (c_1, c_2, \ldots, c_k), \zeta\mathfrak{e})$ be a configuration of $M$ and let $\sigma = w \zeta$,
then $w = \mathrm{Prefix}(\sigma)$ if and only if either $|w|_S = d$ or $\zeta = \varepsilon$.
Thus, for each word $w \in S^*$ with $|w|_S < d$ and each $s \in S$, we introduce a relation of the form
\[
	(([\tau,w,\pi,i],s),([\tau,ws,\pi,i],\mathbf{0})) \in \delta
\]
for each $\tau,\pi,i$.
From these relations we have a unique partial computation
\begin{equation}\label{eq:finding-prefix}
	([\tau,w,\pi,i], (c_1, c_2, \ldots, c_k), \zeta\mathfrak{e})
	\vdash^*
	([\tau,w',\pi,i], (c_1, c_2, \ldots, c_k), \zeta'\mathfrak{e})
\end{equation}
where $w' = \mathrm{Prefix}(\sigma)$ and $\sigma = w\zeta = w'\zeta'$.
We then apply the map $\Delta$ as follows.

Let $\tau \in \text{\sc StrPatt}$ be a strong pattern, let $w,\zeta \in S^*$ with $w = \mathrm{Prefix}(w\zeta)$ and $|w|_S \geqslant 1$, and let $\Delta(\tau,w) = (x,\tau',w')$.
From \cref{lemma:map-delta}, we see that for each vector $u \in \mathcal{N}_\tau$ we have $(u')^{\tau'}w'\zeta \simeq u^\tau w \zeta$ where $u' = \mathrm{Proj}_{\tau,\tau'}(u)+e_{\tau',x}$.
Moreover, we see that
\[
	C_{\pi,i}(\mathrm{Proj}_{\tau,\pi}(u')) =
		C_{\pi,i}(\mathrm{Proj}_{\tau,\pi}(u)) +
		C_{\pi,i}(\mathrm{Proj}_{\tau',\pi}(e_{\tau',x})).
\]
Notice that $w = \mathrm{Prefix}(w\zeta)$ if and only if either $|w|_S = d$ or $\zeta = \varepsilon$.
If $|w|_S = d$, then we introduce the relation
\[
	(([\tau,w,\pi,i],\varepsilon),([\tau',w',\pi,i],C_{\pi,i}(\mathrm{Proj}_{\tau',\pi}(e_{\tau',x})))) \in \delta
\]
for each $\pi,i$;
otherwise, if $|w|_S < d$, then we introduce the relation
\[
	(([\tau,w,\pi,i],\mathfrak{e}),([\tau',w',\pi,i],C_{\pi,i}(\mathrm{Proj}_{\tau',\pi}(e_{\tau',x})))) \in \delta
\]
for each $\pi,i$.
That is, we may apply the map $\Delta$ with the above relations.

Combining these transitions with those described in (\ref{eq:finding-prefix}), we see that after nondeterministically choosing a basic polyhedral set in (\ref{eq:first-transition}), the machine will deterministically perform \cref{algo:word-shuffling}, then enter a configuration of the form
\begin{equation}\label{eq:after-word-shuffle}
	(
		[\tau,\varepsilon,\pi,i],(c_1, c_2,\ldots,c_k), \mathfrak{e}
	)
\end{equation}
with $(c_1,c_2,\ldots, c_k) = C_{\pi,i}(v)$ where $(v,\tau) = \mathrm{Shuffle}(\sigma)$.

For each pair of patterns $\pi,\tau$ with $\pi \neq \tau$, and each basic polyhedral set $\mathcal{B}_{\pi,i}$, the machine has no transitions out of any configuration $([\tau,\varepsilon,\pi,i],c,\mathfrak{e})$ where $c \in \mathbb{Z}^k$.
Hence, if the computation enters such a state, it cannot continue to an accepting configuration.
Thus, we may assume without loss of generality that the machine nondeterministically chose the basic polyhedral set $\mathcal{B}_{\pi,i}$ with $\pi = \tau$ when performing the transition in (\ref{eq:first-transition}).
In the rest of our proof, we describe how the machine verifies that $v \in \mathcal{B}_{\pi,i}$.

\medskip%
\noindent%
\textit{Checking Polyhedral Set Membership.}%
\nopagebreak\smallskip%
\nopagebreak\par%
\nopagebreak\noindent%
Suppose that
\[
	(q_0,(0,0,\ldots,0),\sigma\mathfrak{e})
	\vdash^*
	([\pi,\varepsilon,\pi,i],(c_1,c_2,\ldots,c_k),\mathfrak{e}),
\]
then $(c_1,c_2,\ldots,c_k) = C_{\pi,i}(v)$ where $(v,\pi) = \mathrm{Shuffle}(\sigma)$.
For each state of the form $[\pi,\varepsilon,\pi,i]$, we introduce a relation
\[
	(([\pi,\varepsilon,\pi,i],\mathfrak{e}),(q_{\pi,i},\mu_{\pi,i})) \in \delta
\]
where
\begin{multline*}
	\mu_{\pi,i}
	=
	(
		-\beta_{\pi,i,1}-1,
		-\beta_{\pi,i,2}-1,
		\ldots,
		-\beta_{\pi,i,k}-1,
	\\
		-\eta_{\pi,i,1},
		-\eta_{\pi,i,2},
		\ldots
		-\eta_{\pi,i,k},
	\\
		-\lambda_{\pi,i,1},
		-\lambda_{\pi,i,2},
		\ldots
		-\lambda_{\pi,i,k},
		0,0,\ldots,0
	).
\end{multline*}
From this relation we have
\[
	([\pi,\varepsilon,\pi,i],(c_1,c_2,\ldots,c_k),\mathfrak{e})
	\vdash
	(q_{\pi,i},(c'_1,c'_2,\ldots,c'_k),\mathfrak{e})
\]
where $v \in \mathcal{B}_{\pi,i}$ if and only if $(c'_1,c'_2,\ldots,c'_k)$ belongs to the set
\[
	\mathbb{N}^{K_{\pi,i,1}} \times 
	\theta_{\pi,i,1}\mathbb{Z}
	\times \theta_{\pi,i,2}\mathbb{Z}
	\times \cdots
	\times \theta_{\pi,i,K_{\pi,i,2}}\mathbb{Z}
	\times \{0\}^{k-K_{\pi,i,1}-K_{\pi,i,2}}.
\]
We verify $v$'s membership to $\mathcal{B}_{\pi,i}$ by introducing additional relations as follows.
For each $1 \leqslant j \leqslant K_{\pi,i,1}$, we have
\[
	((q_{\pi,i},\mathfrak{e}),(q_{\pi,i},-e_j)) \in \delta,
\]
where $e_j \in \mathbb{Z}^k$ is the $j$-th standard basis element, and for each $ 1 \leqslant j \leqslant  K_{\pi,i,2}$
\[
	((q_{\pi,i},\mathfrak{e}),(q_{\pi,i},\pm\theta_{\pi,i,j} \, e_{j'})) \in \delta
\]
where $j' = K_{\pi,i,1}+j$ and $e_{j'} \in \mathbb{Z}^k$ is the $j'$-th standard basis element.
From these relations, we see that we see that
\[
	(q_0,(0,0,\ldots,0),\sigma\mathfrak{e})
	\vdash^*
	(q_{\pi,i},(0,0,\ldots,0),\mathfrak{e}),
\]
if and only if $v \in \mathcal{B}_{\pi,i}$ where $(v,\pi) = \mathrm{Shuffle}(\sigma)$.
\end{proof}

\section*{Acknowledgements}

The author would like to thank their supervisor Murray Elder for introducing this interesting research topic, and the anonymous reviewer for their detailed and helpful comments and suggestions.
This research is supported by an Australian Government Research Training Program Scholarship, and partially supported by Australian Research Council grant DP160100486.

% \bib, bibdiv, biblist are defined by the amsrefs package.
\begin{bibdiv}
	\begin{biblist}
		
		\bib{bartholdi2003a}{incollection}{
			author={Bartholdi, Laurent},
			author={Grigorchuk, Rostislav},
			author={Nekrashevych, Volodymyr},
			title={From fractal groups to fractal sets},
			date={2003},
			booktitle={Fractals in {G}raz 2001},
			series={Trends Math.},
			publisher={Birkh\"{a}user, Basel},
			pages={25\ndash 118},
			review={\MR{2091700}},
		}
		
		\bib{benson1983}{article}{
			author={Benson, M.},
			title={Growth series of finite extensions of {$\mathbb{Z}^{n}$} are
				rational},
			date={1983},
			ISSN={0020-9910},
			journal={Invent. Math.},
			volume={73},
			number={2},
			pages={251\ndash 269},
			review={\MR{714092}},
		}
		
		\bib{bridson2012}{article}{
			author={Bridson, Martin~R.},
			author={Burillo, Jos\'{e}},
			author={Elder, Murray},
			author={\v{S}uni\'{c}, Zoran},
			title={On groups whose geodesic growth is polynomial},
			date={2012},
			ISSN={0218-1967},
			journal={Internat. J. Algebra Comput.},
			volume={22},
			number={5},
			pages={1250048, 13},
			review={\MR{2949213}},
		}
		
		\bib{carlson1921}{article}{
			author={Carlson, Fritz},
			title={\"{U}ber {P}otenzreihen mit ganzzahligen {K}oeffizienten},
			date={1921},
			ISSN={0025-5874},
			journal={Math. Z.},
			volume={9},
			number={1-2},
			pages={1\ndash 13},
			review={\MR{1544447}},
		}
		
		\bib{cleary2006}{article}{
			author={Cleary, Sean},
			author={Elder, Murray},
			author={Taback, Jennifer},
			title={Cone types and geodesic languages for lamplighter groups and
				{T}hompson's group {$F$}},
			date={2006},
			ISSN={0021-8693},
			journal={J. Algebra},
			volume={303},
			number={2},
			pages={476\ndash 500},
			review={\MR{2255118}},
		}
		
		\bib{harpe2000}{book}{
			author={de~la Harpe, Pierre},
			title={Topics in geometric group theory},
			series={Chicago Lectures in Mathematics},
			publisher={University of Chicago Press, Chicago, IL},
			date={2000},
			ISBN={0-226-31719-6; 0-226-31721-8},
			review={\MR{1786869}},
		}
		
		\bib{elder2005}{article}{
			author={Elder, Murray},
			title={Regular geodesic languages and the falsification by fellow
				traveler property},
			date={2005},
			ISSN={1472-2747},
			journal={Algebr. Geom. Topol.},
			volume={5},
			pages={129\ndash 134},
			review={\MR{2135549}},
		}
		
		\bib{elder2008}{article}{
			author={Elder, Murray},
			author={Kambites, Mark},
			author={Ostheimer, Gretchen},
			title={On groups and counter automata},
			date={2008},
			ISSN={0218-1967},
			journal={Internat. J. Algebra Comput.},
			volume={18},
			number={8},
			pages={1345\ndash 1364},
			review={\MR{2483126}},
		}
		
		\bib{epstein1992}{book}{
			author={Epstein, David B.~A.},
			author={Cannon, James~W.},
			author={Holt, Derek~F.},
			author={Levy, Silvio V.~F.},
			author={Paterson, Michael~S.},
			author={Thurston, William~P.},
			title={Word processing in groups},
			publisher={Jones and Bartlett Publishers, Boston, MA},
			date={1992},
			ISBN={0-86720-244-0},
			review={\MR{1161694}},
		}
		
		\bib{evetts2019}{article}{
			author={Evetts, Alex},
			title={Rational growth in virtually abelian groups},
			date={2019},
			ISSN={0019-2082},
			journal={Illinois J. Math.},
			volume={63},
			number={4},
			pages={513\ndash 549},
			review={\MR{4032813}},
		}
		
		\bib{fekete1923}{article}{
			author={Fekete, M.},
			title={\"{U}ber die {V}erteilung der {W}urzeln bei gewissen
				algebraischen {G}leichungen mit ganzzahligen {K}oeffizienten},
			date={1923},
			ISSN={0025-5874},
			journal={Math. Z.},
			volume={17},
			number={1},
			pages={228\ndash 249},
			review={\MR{1544613}},
		}
		
		\bib{flajolet2009}{book}{
			author={Flajolet, Philippe},
			author={Sedgewick, Robert},
			title={Analytic combinatorics},
			publisher={Cambridge University Press, Cambridge},
			date={2009},
			ISBN={978-0-521-89806-5},
			review={\MR{2483235}},
		}
		
		\bib{greibach1978}{article}{
			author={Greibach, S.~A.},
			title={Remarks on blind and partially blind one-way multicounter
				machines},
			date={1978},
			ISSN={0304-3975},
			journal={Theoret. Comput. Sci.},
			volume={7},
			number={3},
			pages={311\ndash 324},
			review={\MR{513714}},
		}
		
		\bib{grigorchuk1983}{article}{
			author={Grigorchuk, R.~I.},
			title={On the {M}ilnor problem of group growth},
			date={1983},
			ISSN={0002-3264},
			journal={Dokl. Akad. Nauk SSSR},
			volume={271},
			number={1},
			pages={30\ndash 33},
			review={\MR{712546}},
		}
		
		\bib{gromov1987}{incollection}{
			author={Gromov, M.},
			title={Hyperbolic groups},
			date={1987},
			booktitle={Essays in group theory},
			series={Math. Sci. Res. Inst. Publ.},
			volume={8},
			publisher={Springer, New York},
			pages={75\ndash 263},
			review={\MR{919829}},
		}
		
		\bib{gromov1981}{article}{
			author={Gromov, Mikhael},
			title={Groups of polynomial growth and expanding maps},
			date={1981},
			ISSN={0073-8301},
			journal={Inst. Hautes Études Sci. Publ. Math.},
			volume={53},
			number={1},
			pages={53\ndash 73},
			review={\MR{623534}},
		}
		
		\bib{lipshitz1989}{article}{
			author={Lipshitz, L.},
			title={{$D$}-finite power series},
			date={1989},
			ISSN={0021-8693},
			journal={J. Algebra},
			volume={122},
			number={2},
			pages={353\ndash 373},
			review={\MR{999079}},
		}
		
		\bib{mann2012}{book}{
			author={Mann, Avinoam},
			title={How groups grow},
			series={London Mathematical Society Lecture Note Series},
			publisher={Cambridge University Press, Cambridge},
			date={2012},
			volume={395},
			ISBN={978-1-107-65750-2},
			review={\MR{2894945}},
		}
		
		\bib{massazza1993}{article}{
			author={Massazza, P.},
			title={Holonomic functions and their relation to linearly constrained
				languages},
			date={1993},
			ISSN={0988-3754},
			journal={RAIRO Inform. Th\'{e}or. Appl.},
			volume={27},
			number={2},
			pages={149\ndash 161},
			review={\MR{1217683}},
		}
		
		\bib{muller1983}{article}{
			author={Muller, David~E.},
			author={Schupp, Paul~E.},
			title={Groups, the theory of ends, and context-free languages},
			date={1983},
			ISSN={0022-0000},
			journal={J. Comput. System Sci.},
			volume={26},
			number={3},
			pages={295\ndash 310},
			review={\MR{710250}},
		}
		
		\bib{nekrashevych2005}{book}{
			author={Nekrashevych, Volodymyr},
			title={Self-similar groups},
			series={Mathematical Surveys and Monographs},
			publisher={American Mathematical Society, Providence, RI},
			date={2005},
			volume={117},
			ISBN={0-8218-3831-8},
			review={\MR{2162164}},
		}
		
		\bib{rabin1959}{article}{
			author={Rabin, M.~O.},
			author={Scott, D.},
			title={Finite automata and their decision problems},
			date={1959},
			ISSN={0018-8646},
			journal={IBM J. Res. Develop.},
			volume={3},
			pages={114\ndash 125},
			review={\MR{103795}},
		}
		
		\bib{shapiro1994}{article}{
			author={Shapiro, Michael},
			title={A note on context-sensitive languages and word problems},
			date={1994},
			ISSN={0218-1967},
			journal={Internat. J. Algebra Comput.},
			volume={4},
			number={4},
			pages={493\ndash 497},
			review={\MR{1313124}},
		}
		
		\bib{shapiro1997}{article}{
			author={Shapiro, Michael},
			title={Pascal's triangles in abelian and hyperbolic groups},
			date={1997},
			ISSN={0263-6115},
			journal={J. Austral. Math. Soc. Ser. A},
			volume={63},
			number={2},
			pages={281\ndash 288},
			review={\MR{1475566}},
		}
		
		\bib{wasow1965}{book}{
			author={Wasow, Wolfgang},
			title={Asymptotic expansions for ordinary differential equations},
			series={Pure and Applied Mathematics, Vol. XIV},
			publisher={Interscience Publishers John Wiley \& Sons, Inc., New
				York-London-Sydney},
			date={1965},
			review={\MR{0203188}},
		}
		
	\end{biblist}
\end{bibdiv}

\end{document}